\theoremstyle{plain}
\newtheorem{theorem}{Theorem}[section]
\newtheorem{lemma}[theorem]{Lemma}
\newtheorem{proposition}[theorem]{Proposition}
\newtheorem{fact}[theorem]{Fact}
\newtheorem*{fact*}{Fact}
\newtheorem{claim}[theorem]{Claim}
\newtheorem*{claim*}{Claim}
\theoremstyle{definition}
\newtheorem{definition}[theorem]{Definition}
\newtheorem*{definition*}{Definition}
\newtheorem{notation}[theorem]{Notation}
\newtheorem*{notation*}{Notation}
\theoremstyle{remark}
\newtheorem{remark}[theorem]{Remark}
\newtheorem*{remark*}{Remark}
\newtheorem{example}[theorem]{Example}
\newtheorem*{example*}{Example}
\newtheorem*{note*}{Note}
\newtheorem{question}[theorem]{Question}
\newtheorem*{question*}{Question}
\begin{document}
\newcommand{\eq}{{\operatorname{eq}}}
\newcommand{\tp}{{\operatorname{tp}}}
\newcommand{\dcl}{{\operatorname{dcl}}}
\newcommand{\acl}{{\operatorname{acl}}}
\newcommand{\im}{{\operatorname{im}}}
\newcommand{\Th}{{\operatorname{Th}}}
\newcommand{\ACVF}{{\operatorname{ACVF}}}
\newcommand{\fin}{{\operatorname{fin}}}
\newcommand{\res}{{\operatorname{res}}}
\newcommand{\alg}{{\operatorname{alg}}}
\newcommand{\lcm}{{\operatorname{lcm}}}
\newcommand{\Gal}{{\operatorname{Gal}}}
\newcommand{\End}{{\operatorname{End}}}
\newcommand{\V}{{\mathbb{V}}}
\newcommand{\N}{{\mathbb{N}}}
\newcommand{\Z}{{\mathbb{Z}}}
\newcommand{\Q}{{\mathbb{Q}}}
\newcommand{\R}{{\mathbb{R}}}
\newcommand{\K}{{\mathbb{K}}}
\newcommand{\F}{{\mathbb{F}}}
\newcommand{\A}{{\mathbb{A}}}
\newcommand{\C}{{\mathbb{C}}}
\newcommand{\U}{{\mathcal{U}}}
\newcommand{\M}{{\mathcal{M}}}
\renewcommand{\b}{{\overline{b}}}
\renewcommand{\d}{{\overline{d}}}
\let\polL\L
\renewcommand{\L}{{\mathcal{L}}}
\renewcommand{\div}{{\operatorname{div}}}

\newcommand{\Lang}{\mathcal{L}}
\newcommand{\Leq}{\mathcal{L}^{\eq}}
\newcommand{\Ldiv}{\mathcal{L}_{\operatorname{div}}}
\newcommand{\Teq}{T^{\eq}}

\newcommand{\Kalg}{K^{\operatorname{alg}}}
\newcommand{\Fpalg}{\F_p^{\operatorname{alg}}}

\newcommand{\defn}[1]{{\bf #1}}

\newcommand{\ns}[1]{{^*}\!#1}
\newcommand{\nsA}{\ns A}

\newcommand{\dcleq}{\dcl^{\eq}}
\newcommand{\acleq}{\acl^{\eq}}

\newcommand{\bigcupdot}{\dot\bigcup}

\newcommand{\eps}{\epsilon}

\newcommand{\TODO}[1]{\em {\bf TODO:} #1}
\newcommand{\FIXME}[1]{\em {\bf FIXME:} #1}

\title{Incidence bounds in positive characteristic via valuations and 
distality}
\author{Martin Bays \& Jean-François Martin}

\begin{abstract}
  We prove distality of quantifier-free relations on valued fields with finite 
  residue field. By a result of Chernikov-Galvin-Starchenko, this yields  
  Szemerédi-Trotter-like incidence bounds for function fields over finite 
  fields. We deduce a version of the Elekes-Szabó theorem for such fields.
\end{abstract}

\maketitle

\section{Introduction}

We obtain the following incidence bound.
\begin{theorem} \label{t:SzTBasic}
  Let $p$ be a prime, and let $K$ be a finitely generated extension of 
  $\F_p$.
  Let $E \subseteq  K^n \times K^m$ be the zero set of a set of polynomials in 
  $K[x_1,\ldots ,x_{n+m}]$.
  Let $d,s \in \N$ and suppose $E$ is $K_{d,s}$-free, i.e.\ if $A\times B \subseteq  E$ 
  then $|A| < d$ or $|B| < s$.

  Then there exists $\epsilon > 0$ (which can in principle be calculated for a 
  given $K$ as a function of the number and degrees of the polynomials 
  defining $E$) and $C > 0$ such that for any finite subsets $A \subseteq  K^n$ and $B 
  \subseteq  K^m$,
  $$|E \cap (A \times B)| \leq  C(|A|^{1-\epsilon}
  |B|^{\frac{d-1}d(1+\epsilon)} + |A| + |B|).$$
\end{theorem}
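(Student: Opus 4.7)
The plan is to reduce the theorem to two ingredients: the distality result announced in the abstract, concerning quantifier-free relations on valued fields with finite residue field, and the incidence bound of Chernikov-Galvin-Starchenko for distal relations.

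First, I would equip $K$ with a non-trivial valuation $v$ whose residue field is finite. Since $K$ is a finitely generated extension of $\F_p$, it is the function field of a variety over $\F_p$, and such valuations exist in abundance: for example, choose a projective normal model of $K$ over $\F_p$ and localise at a codimension-one point whose residue field is finite over $\F_p$. Endowed with $v$, the field $(K,v)$ becomes an $\Ldiv$-structure with finite residue field, and since $E$ is the zero set of a finite system of polynomials, it remains quantifier-free definable in this language as a bipartite relation between $K^n$ and $K^m$.

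Next, I would apply the main distality theorem of the paper to deduce that $E$ is distal in $(K,v)$. The Chernikov-Galvin-Starchenko theorem then converts this distality, together with the $K_{d,s}$-freeness hypothesis, into an incidence bound of exactly the desired shape
$$|E \cap (A \times B)| \leq C\bigl(|A|^{1-\epsilon} |B|^{\frac{d-1}{d}(1+\epsilon)} + |A| + |B|\bigr)$$
for some $\epsilon > 0$. The fact that $\epsilon$ can in principle be computed from the number and degrees of the defining polynomials follows by tracking formula-complexity through both the distality argument and the CGS reduction.

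The main obstacle is, unsurprisingly, the distality statement itself, which is the substantive new content of the paper; finiteness of the residue field is essential here, as in its absence (for example in $\ACVF$) one runs into genuinely non-distal behaviour arising from the residue field component. A secondary point to confirm is that the formula-complexity bookkeeping passes through the Chernikov-Galvin-Starchenko theorem intact, so that the computability claim for $\epsilon$ is legitimate.
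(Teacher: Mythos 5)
Your proposal follows essentially the same route as the paper: equip $K$ with a valuation of finite residue field, invoke the distality theorem to get a distal cell decomposition of (the trace of) $E$, and then apply the Chernikov--Galvin--Starchenko incidence bound (Fact~\ref{f:distInc} in the paper), finally setting $\epsilon = \frac{1}{dt-1}$ to massage the exponents into the stated form.

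One small but genuine slip: your construction of the valuation does not work in general. A codimension-one point on a projective normal model of $K$ over $\F_p$ has residue field a function field of transcendence degree one less than that of $K$; this is finite only when $K$ has transcendence degree $1$ over $\F_p$, i.e.\ when $K$ is the function field of a curve. For higher transcendence degree you must compose valuations down a chain of codimension-one specialisations (or equivalently, pick a flag of subvarieties ending in a closed point) to drop the residue field to a finite field. The paper's Lemma~\ref{l:fgRes} achieves this by induction, using the classical facts that a valuation with finite residue field extends to a finite extension with finite residue field and extends to $K(X)$ without enlarging the residue field. Everything else in your proposal --- the reduction to distality, the application of the incidence bound, and the observation that $\epsilon$ is in principle computable by tracking formula complexity --- matches the paper.
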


\subsection{Background and motivation}
The Szemerédi-Trotter theorem bounds the number of point-line incidences 
between a set $P$ of points and a set $L$ of lines in the real plane.
We state a version with an explicit bound, \cite[Theorem~8.3]{TaoVu}:
\begin{fact} \label{f:STOrig}
  For any finite $P$ and $L$,
  $$|\{ (p,l) \in P \times L : p \in l\}|
  \leq  4|P|^{\frac23}|L|^{\frac23}+4|P|+|L|.$$
\end{fact}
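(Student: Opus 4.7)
The plan is to use Székely's crossing-number method, which yields exactly these explicit constants. First I would reduce to the case in which every line in $L$ meets $P$ in at least two points, by discarding the lines with $0$ or $1$ incidences; these contribute at most $|L|$ incidences in total, and that loss will reappear as the last summand of the bound.

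Next I would construct a topological graph $G$ with vertex set $P$, whose edges are, for each $\ell \in L$, the $k_\ell - 1$ line segments joining consecutive points of $P \cap \ell$ along $\ell$, where $k_\ell := |P \cap \ell| \geq 2$. Since two distinct lines share at most one point, no pair of vertices lies on two of these segments, so $G$ is simple; moreover, any crossing in the given drawing corresponds to the intersection of two of the lines of $L$, so $\operatorname{cr}(G) \leq \binom{|L|}{2} < |L|^2/2$. Writing $I$ for the number of incidences after the reduction, the edge count is $|E(G)| = \sum_\ell (k_\ell - 1) = I - |L|$.

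The third step is to invoke the crossing lemma: for any simple graph with $v$ vertices and $e \geq 4v$ edges, $\operatorname{cr}(G) \geq e^3/(64 v^2)$. If $I - |L| \geq 4|P|$, combining this with the upper bound on $\operatorname{cr}(G)$ gives $(I-|L|)^3 \leq 32 |P|^2 |L|^2$ and hence $I \leq 32^{1/3} |P|^{2/3} |L|^{2/3} + |L| \leq 4|P|^{2/3} |L|^{2/3} + |L|$, using $32^{1/3} < 4$. Otherwise $I - |L| < 4|P|$, and the bound again holds trivially. Either way we obtain $I \leq 4|P|^{2/3}|L|^{2/3} + 4|P| + |L|$.

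The only non-elementary ingredient, and the ``hard'' step if one is building the argument from scratch, is the crossing lemma; I would cite the standard probabilistic proof, which deduces it from Euler's bound $e \leq 3v - 6$ for simple planar graphs by randomly retaining each vertex with an optimally chosen probability.
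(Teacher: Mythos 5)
Your argument is correct and is essentially the proof behind the paper's statement: Fact~\ref{f:STOrig} is quoted without proof from \cite[Theorem~8.3]{TaoVu}, and the proof there is exactly this Székely crossing-number argument (crossing lemma $\operatorname{cr}(G) \geq e^3/(64v^2)$ for $e \geq 4v$, drawing the graph of consecutive points along lines, yielding the constants $32^{1/3} < 4$ and $4$). One minor bookkeeping slip: after discarding the lines meeting $P$ in at most one point, the edge count is $I - |L''|$ with $L''$ the set of surviving lines, not $I - |L|$; either work with $|L''|$ and add back the at most $|L'|$ discarded incidences at the end (so the last summand is $|L''| + |L'| = |L|$), or skip the discarding entirely by noting $\sum_{\ell \in L}(k_\ell - 1) \leq |E(G)|$ since lines with $k_\ell \leq 1$ contribute nonpositively — either way the stated bound follows as you say.
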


Statements of the form of Theorem~\ref{t:SzTBasic} can be seen as generalisations of 
this, replacing the point-line incidence relation with other algebraic binary 
algebraic relations. For characteristic 0 fields, such results were proven 
first in \cite[Theorem~9]{ES}, and subsequently strengthened in
\cite[Theorem~1.2]{FoxEtAl}. Using such bounds for binary relations, 
Elekes-Szabó \cite{ES} obtained analogous bounds for ternary algebraic 
relations.

In positive characteristic, versions of Fact~\ref{f:STOrig} have been proven 
(\cite{BKT},\cite{SZ}) where one restricts to sets which are small compared to 
the characteristic. This is related to the sum-product phenomenon in fields, 
where finite fields are known to be the only obstruction (\cite{BKT}, 
\cite[Theorem~2.55]{TaoVu}).

Meanwhile, a special case of results in \cite{CGS} and 
\cite[Section~2]{CS-ES1d} yields a version of Theorem~\ref{t:SzTBasic} in 
characteristic 0 by seeing it as a consequence of the fact that the complex 
field is a reduct of a \emph{distal} structure, namely the real field.
Distality is a notion originating in model theory which imposes a certain 
combinatorial restriction on the complexity of definable sets in a structure; 
this notion and its incidence theoretic implications are summarised in Section 
\ref{s:CSbackground} below.
It would be surprising if the positive characteristic results mentioned in the 
previous paragraph, which require an unbounded characteristic, could be seen 
as instances of distality. We consider instead the orthogonal situation of a 
function field over a finite field, and we prove Theorem~\ref{t:SzTBasic} by finding 
sufficient distality to trigger the incidence bounds of \cite{CGS}. We obtain 
this distality using elementary notions from the model theory of valued 
fields, and in fact our results apply more generally to any valued field with 
finite residue field. It follows from \cite{KSW} that a positive 
characteristic valued field with finite residue field is not NIP, and so is 
not the reduct of a distal structure; this forces us to use a more local 
notion of distality.

Our motivation for considering these fields is \cite[Section~5]{Hr-psfDims}, 
which suggests a unifying explanation for all the results on existence of 
bounds described above: they are all incarnations of {\em modularity} in the 
model-theoretic sense, and they are consistent with a Zilber dichotomy 
statement of the form ``any failure of modularity arises from an infinite 
pseudofinite field''. In other words, unboundedly large finite fields should 
be the cause of any failure of the bounds. As a special case, this would 
suggest that for a field $K$ of characteristic $p > 0$ which has finite 
\defn{algebraic part} $\K \cap \F_p^\alg$, incidence bounds and Elekes-Szabó 
results should go through as in characteristic 0.

We partially confirm this only in the special\footnote{See 
Proposition~\ref{p:latentFields}} case of fields admitting finite residue field. 
However, in Theorem~\ref{t:ES} we do confirm for such $K$ that an Elekes-Szabó 
result applies: a mild strengthening of Theorem~\ref{t:SzTBasic} suffices as input 
to the proof of one of the main results of \cite{BB-cohMod}, yielding 
Elekes-Szabó bounds for algebraic relations of arbitrary arity and codimension 
in $K^n$ which do not arise from 1-dimensional algebraic groups.

\subsection{Acknowledgements}
Thanks to Artem Chernikov and Sergei Starchenko for conversation which 
launched the project, to Sylvy Anscombe, Philipp Dittmann, Udi Hrushovski, and 
Silvain Rideau-Kikuchi for miscellaneous helpful conversation, and to 
Elisabeth Bouscaren for matchmaking and sanity checking. Thanks also to Martin 
Hils for pointing out an error in the bounds in Remark~\ref{r:bounds} in an earlier 
version of the paper.

\emph{\small{Bays was supported in part by the Deutsche Forschungsgemeinschaft 
(DFG, German Research Foundation) under Germany's Excellence Strategy EXC 
2044–390685587, Mathematics Münster: Dynamics–Geometry–Structure.}}

\section{Preliminaries}

We use basic notions and notation from model theory, as presented in e.g.\ 
\cite[Chapter~1]{TZ}.

Let $\Lang$ be a (possibly many-sorted) first order language and $T$ a 
complete $\Lang$-theory.

\begin{notation}
  If $\M \vDash  T$ and $B \subseteq  \M$ and $x=(x_1,\ldots ,x_n)$ is a tuple of variables of 
  sorts $S_1,\ldots ,S_n$, we write $B^x$ for $\prod_i (S_i(\M) \cap B)$.
  We write $|x|$ for the length $|x| = n$ of the tuple.

  For a set $B$,
  we write $B_0 \subseteq_{\fin}  B$ to mean that $B_0$ is a finite subset of $B$.

  For a formula $\phi$, we define $\phi^0 := \neg\phi$ and $\phi^1 := \phi$.

  If $\phi(x;y)$ is a partitioned formula and $b \in \M^x$ and $A \subseteq  \M$, we 
  set $\tp_\phi(b/A) := \{ \phi(x,c)^\eps : c \in A^y;\; \eps \in \{0,1\};\; \M 
  \vDash  \phi(b,c)^\eps \}$. The partitioning will often be left implicit.
\end{notation}

\section{Distality}
\subsection{Distal cell decompositions}
\label{s:CSbackground}
We recall the following definition from \cite{CGS}:

\begin{definition}
  Let $A$ and $B$ be sets.
  A binary relation $E \subseteq  A \times B$ {\em admits a distal cell decomposition} 
  with exponent $t \in \R$ if there exist $s \in \N$ and finitely many relations 
  $\Delta_i \subseteq  A \times B^s$ and $C \in \R$ such that for every $B_0 \subseteq_{\fin}  B$, 
  $A$ can be written as a (not necessarily disjoint) union of $\leq  C|B_0|^t$ 
  subsets of the form $\Delta_i(c)$ for $c \in {B_0}^s$, each of which is {\em cut} 
  by no $E(b)$ for $b \in B_0$, i.e.\ $\Delta_i(c) \subseteq  E(b)$ or $\Delta_i(c) \cap 
  E(b) = \emptyset $.
\end{definition}

It was proven in \cite{CGS} that relations admitting distal cell 
decompositions enjoy certain incidence bounds. For our purposes, the following 
version of this deduced in \cite[Theorem~2.6,2.7(2)]{CS-ES1d} is most 
relevant.

A binary relation $E \subseteq  A \times B$ is \defn{$K_{d,s}$-free} if it contains no 
subset $A_0 \times B_0$ with $|A_0| = d$ and $|B_0| = s$.

\begin{fact} \label{f:distInc}
  Let $E \subseteq  A \times B$ be $K_{d,s}$-free and admit a distal cell 
  decomposition with exponent $t$. Then for $A_0 \subseteq_{\fin}  A$ and $B_0 \subseteq_{\fin}  B$,
  $$|E \cap (A_0 \times B_0)| \leq  O_E(|A_0|^{\frac{(t-1)d}{td-1}} 
  |B_0|^{\frac{td-t}{td-1}} + |A_0| + |B_0|).$$
\end{fact}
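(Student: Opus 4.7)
The statement is a Zarankiewicz-type consequence of the distal cell decomposition, in the spirit of the Szemer\'edi--Trotter theorem. The plan is to combine three ingredients: the cell decomposition applied to a random subsample of $B_0$, a case analysis inside each cell driven by $K_{d,s}$-freeness, and an optimization of the sampling parameter from which the claimed exponents emerge.

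\textbf{Main steps.} First I would fix a parameter $r \leq |B_0|$ (to be chosen later) and pick a subsample $B_1 \subseteq B_0$ with $|B_1| = r$, uniformly at random. Applying the distal cell decomposition to $B_1$ in place of $B_0$ gives a cover of $A$ by $N \leq Cr^t$ cells $\Delta_1, \dots, \Delta_N$, each uncut by $E(b)$ for every $b \in B_1$. Assigning each point of $A_0$ to a single cell containing it then partitions $A_0$ into pieces $A_i$ of sizes $n_i := |A_i|$. Inside cell $\Delta_i$, uncutness forces $E(b) \cap A_i$ to be either all of $A_i$ or empty for every $b \in B_1$, and the $K_{d,s}$-freeness supplies the key dichotomy: if $n_i \geq d$, then fewer than $s$ elements $b \in B_0$ can satisfy $\Delta_i \subseteq E(b)$, since otherwise $d$ points in $A_i$ together with such $b$'s would exhibit $K_{d,s} \subseteq E$. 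Splitting cells according to whether $n_i < d$ or $n_i \geq d$ and aggregating bounds $|E \cap (A_0 \times B_1)|$ in terms of $|A_0|$ and $Nr = O(r^{t+1})$; a linearity-of-expectation argument over the random choice of $B_1$ then transfers this to a bound on $|E \cap (A_0 \times B_0)|$, scaled by $|B_0|/r$.

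\textbf{Where it gets hard.} Carried out naively, the above only delivers a bound of the shape $O(|A_0|^{t/(t+1)} |B_0| + |A_0| + |B_0|)$, with a full $|B_0|$ on the leading term; this is strictly weaker than the stated exponent $\frac{t(d-1)}{td-1} < 1$ on $|B_0|$. Upgrading to the advertised exponents is the crux, and I expect it to require a more refined analysis: either a Clarkson--Shor-style count of how many cells a typical $b \in B_0 \setminus B_1$ can cut (extracted from the VC-type information implicit in the cell decomposition), combined with a K\H{o}v\'ari--S\'os--Tur\'an estimate inside each cell to control partial incidences created by such cuts, or a recursive application of the present bound to the sub-instance defined by each individual cell. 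A joint optimization in $r$ that depends on both $|A_0|$ and $|B_0|$ would then pin down the exponents $\frac{(t-1)d}{td-1}$ and $\frac{t(d-1)}{td-1}$ from the balance between cell count, within-cell Zarankiewicz, and sampling correction.
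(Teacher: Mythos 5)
This statement is quoted in the paper as a \emph{Fact}, with the proof delegated to \cite{CGS} and to Theorems~2.6 and 2.7(2) of \cite{CS-ES1d}; the paper itself contains no argument to compare against. Your outline does point at the same circle of ideas used there (random subsampling of $B_0$, the cell decomposition, K\H{o}v\'ari--S\'os--Tur\'an inside cells, optimization of a parameter $r$), so the route is the right one in spirit. But as written this is not a proof: you explicitly stop at the crux, conceding that your argument only yields a bound with a full factor $|B_0|$ and then listing two candidate ways one \emph{might} close the gap without carrying either out.

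The missing ingredient has a precise name: the \emph{distal cutting lemma} of \cite{CGS}. From a distal cell decomposition with exponent $t$ one derives, via the Clarkson--Shor random sampling analysis, that for every $1\leq r\leq |B_0|$ there is a covering of $A$ by $O(r^t)$ cells each of which is \emph{crossed} by at most $O(|B_0|/r)$ of the fibres $E(b)$, $b\in B_0$ --- not merely uncut by the members of the sample $B_1$. This is where the probabilistic argument actually lives; your proposed use of it (bounding $|E\cap(A_0\times B_1)|$ and rescaling by $|B_0|/r$ via linearity of expectation) is not how the transfer works and would indeed lose the exponent on $|B_0|$. Once the cutting lemma is in hand, the count splits per cell into incidences with crossing fibres, bounded by K\H{o}v\'ari--S\'os--Tur\'an as $O(n_i(|B_0|/r)^{1-1/d} + |B_0|/r)$ using $K_{d,s}$-freeness, and incidences with fibres containing the cell, bounded by $O(n_i)$ when $n_i\geq d$ (your dichotomy) and absorbed into the linear terms otherwise. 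Summing gives $O\bigl(|A_0|(|B_0|/r)^{1-1/d} + r^{t-1}|B_0| + |A_0| + |B_0|\bigr)$, and the choice $r = |A_0|^{d/(td-1)}|B_0|^{-1/(td-1)}$ balances the first two terms to produce exactly $|A_0|^{(t-1)d/(td-1)}|B_0|^{(td-t)/(td-1)}$. Without the cutting lemma and this computation, the claimed exponents are not established.
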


\subsection{Distal subsets}

\begin{definition}
  Let $\M \vDash  T$.
  Let $\phi(x;y)$ be an $\Lang$ formula, and let $A,B \subseteq  \M$ be subsets.
  \begin{itemize}\item An $\Lang$-formula $\zeta_\phi(x;z)$ is a \defn{uniform strong honest 
  definition} (\defn{USHD}) for $\phi$ on $A$ over $B$ if
  for any $a \in A$ and finite subset $B_0 \subseteq_{\fin}  B$ with $|B_0| \geq  2$,
  there is $d \in {B_0}^z$
  such that $\tp(a/B_0) \ni \zeta_\phi(x,d) \vdash  \tp_\phi(a/B_0)$.

  \item We omit ``on $A$'' in the case $A = \M$.

  \item We omit ``over $B$'' in the case $B=A$.

  \item $A$ is \defn{distal in} $\M$ if every $\Lang$-formula $\phi(x;y)$ has a 
  USHD on $A$.
  \end{itemize}
\end{definition}

The notion of a strong honest definition comes from \cite{CS-extDefII}. We 
work with USHDs rather than directly with distal cell decompositions in order 
to be able to reduce to one variable (Lemma~\ref{l:distal1}), and because dealing 
with a single formula is more convenient for many purposes. As the following 
remark makes explicit, there is little difference between the two notions.

\begin{remark} \label{r:distDist}
  An $\Lang$-formula $\phi(x;y)$ has a USHD on $A$ over $B$ if and only if the 
  binary relation $E := \phi(A;B) \subseteq  A^x\times B^y$ admits a distal cell 
  decomposition where the $\Delta_i$ are themselves defined by 
  $\Lang$-formulas. The restriction $|B_0| \geq  2$ allows multiple $\Delta_i$ to 
  be coded as one formula, a trick we will use repeatedly; explicitly, if 
  $\delta_i(x,z_i)$ define $\Delta_i$, then 
  $$\zeta(x,z_1,\ldots ,z_s,w_1,\ldots ,w_s,w'_1,\ldots ,w'_s) :=
  \bigwedge_i (\delta_i(x,z_i) \leftrightarrow  w_i = w'_i)$$
  is a USHD for $\phi$ on $A$ over $B$.

  In particular, if $A \subseteq  \M$ is distal in $\M$, then $\phi(A;A)$ admits a 
  distal cell decomposition for any $\Lang$-formula $\phi(x,y)$.
  It follows that this also holds for any $\Lang(A)$-formula $\phi(x,y)$, 
  since if say $\phi(x,y) = \psi(x,y,a)$ with $\psi(x,y,z)$ an 
  $\Lang$-formula, then a distal cell decomposition $(\Delta_i)_i$ for 
  $\psi(A;A,A) \subseteq  A^x \times (A^y \times A^z)$ yields the distal cell 
  decomposition $(\Delta'_i)_i$ for $\phi(A;A)$, where $(x,(y_1,\ldots ,y_s)) \in 
  \Delta'_i \Leftrightarrow  (x,((y_1,a),\ldots ,(y_s,a)) \in \Delta_i$.\footnote{One can 
  similarly show directly that an $\Lang(A)$-formula has an $\Lang(A)$-formula 
  as a USHD on $A$. The published version of this paper incorrectly formulated 
  the definition of ``distal in'' to allow $\phi$ to be an $\Lang(A)$-formula 
  while, if read literally, still requiring the USHD to be an $\Lang$-formula. 
  Thanks to Mira Tartarotti for pointing out this error.}

\end{remark}

\subsection{Reductions}

\begin{lemma} \label{l:distal1}
  A subset $A \subseteq  \M$ is distal in $\M$ if and only if any $\Lang$-formula 
  $\phi(x;y)$ with $|x| = 1$ has a USHD on $A$.
\end{lemma}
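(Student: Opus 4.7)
The forward direction is immediate. For the converse, I would first induct on $|x|$ to show USHDs exist for all $\Lang$-formulas $\phi(x;y)$, and then absorb parameters to extend this to $\Lang(A)$-formulas. Both moves use the disjunction trick from Remark~\ref{r:distDist} to combine finitely many $\Lang$-formulas into one. The parameter reduction is straightforward: given $\phi(x;y) = \psi(x;y,c)$ with $\psi$ in $\Lang$ and $c$ a tuple from $A$, apply the $\Lang$-USHD of $\psi$ on the enlarged set $B_0 \cup \{\text{components of } c\}$; its witness may include components from $c$, but the finitely many possible splits between $B_0$ and $\text{supp}(c)$ are recombined via the trick into a single $\Lang(A)$-USHD for $\phi$.

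The heart is the induction. Assume USHDs exist for all $\Lang$-formulas with at most $n$ main variables, and let $\phi(x', x_0; y)$ have $|x'| = n$, $|x_0| = 1$. Viewing it as $\phi(x_0; x', y)$ in the single main variable $x_0$, the $|x|=1$ hypothesis yields an $\Lang$-USHD $\zeta_0(x_0; z_0)$. Fix $(a', a_0) \in A^x$ and $B_0 \subseteq_{\fin} A$ with $|B_0| \geq 2$. Applying $\zeta_0$ at $a_0$ over $B_0 \cup \{\text{components of } a'\}$ yields a witness $d_0$ possibly containing $a'$-components; absorbing these into the $x'$-slot via the disjunction trick produces an $\Lang$-formula $\zeta_0'(x_0; z_0^B, x')$ and $d_0^B \in B_0^{z_0^B}$ such that $\zeta_0'(x_0; d_0^B, a') \vdash \phi(x_0, a', b)^{\epsilon_b}$ for every $b \in B_0^y$, where $\epsilon_b := \phi(a_0, a', b) \in \{0,1\}$.

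For each $\epsilon \in \{0,1\}$, define the $n$-variable $\Lang$-formula $\psi^\epsilon(x'; z_0^B, y) := \forall x_0 (\zeta_0'(x_0; z_0^B, x') \to \phi(x_0, x'; y)^\epsilon)$. By the inductive hypothesis, $\psi^\epsilon$ has a USHD $\zeta'_\epsilon(x'; z'_\epsilon)$ on $A$; applied at $a'$ over $B_0$, it yields $d'_\epsilon \in B_0^{z'_\epsilon}$ with $\zeta'_\epsilon(x', d'_\epsilon) \vdash \tp_{\psi^\epsilon}(a'/B_0)$. The previous step says exactly that $\psi^{\epsilon_b}(a'; d_0^B, b)$ holds, so $\zeta'_{\epsilon_b}(x', d'_{\epsilon_b}) \vdash \forall x_0 (\zeta_0'(x_0; d_0^B, x') \to \phi(x_0, x'; b)^{\epsilon_b})$; combined with the first conjunct $\zeta_0'(x_0; d_0^B, x')$ applied at $x_0$ itself, this delivers $\phi(x_0, x'; b)^{\epsilon_b}$. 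Hence
$$\zeta_\phi(x', x_0; z_0^B, z'_0, z'_1) := \zeta_0'(x_0; z_0^B, x') \wedge \zeta'_0(x'; z'_0) \wedge \zeta'_1(x'; z'_1),$$
with witness $(d_0^B, d'_0, d'_1)$, is an $\Lang$-USHD for $\phi$ on $A$. The main obstacle is engineering the auxiliary family $\psi^\epsilon$ so that its USHD transfers back to one for $\phi$: the key move is that the inner $\forall x_0$ turns the approximation of $a_0$ by $\zeta_0'$ into an approximation of the $\phi$-truth value that the $n$-variable induction can then approximate in $a'$. The bookkeeping around the two absorption steps (witnesses possibly falling in $\text{supp}(a')$ or $\text{supp}(c)$) is routine modulo the disjunction trick, but requires care to combine formulas across the finitely many splits into a single $\Lang$-formula.
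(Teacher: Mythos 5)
Your proof is correct and follows essentially the same route as the paper: the paper handles the $\Lang(A)$-parameters by the same absorption observation and delegates the induction on $|x|$ to \cite[Proposition~1.9]{ACGZ}, whose argument (a USHD in the last variable, followed by universally quantifying it out and applying the inductive hypothesis to the resulting $n$-variable formulas $\psi^\eps$) is exactly the one you reconstruct.
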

\begin{proof}
  It follows by an inductive argument from the 1-variable case that any 
  $\Lang$-formula has a USHD on $A$; we refer to the proof of
  \cite[Proposition~1.9]{ACGZ} for this argument.
\end{proof}

%
%

\begin{lemma} \label{l:dd-feq}
  Let $\M$ be an $\Lang$-structure.
  Let $S$ and $\widetilde {S}$ be $\Lang$-sorts and let $f : \widetilde {S} \rightarrow  S$ be an 
  $\Lang$-definable function with uniformly finite fibres,
  say $|f^{-1}(b)| \leq  N$ for all $b \in f(\widetilde {S}(\M))$.
  Suppose $B \subseteq  f(\widetilde {S}(\M))$, and let $\widetilde {B} := f^{-1}(B) \subseteq  \widetilde {S}(\M)$.

  Let $A \subseteq  \M^x$ and
  let $\phi(x,y)$ be an $\Lang$-formula
  such that $\phi(x;f(z))$ has a USHD on $A$ over $\widetilde {B}$.
  Then $\phi(x;y)$ has a USHD on $A$ over $B$.
\end{lemma}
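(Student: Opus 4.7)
The plan is to construct $\zeta(x;z)$ for $\phi(x;y)$ on $A$ over $B$ from the given $\zeta'(x;\widetilde z)$ by pulling back along $f$, applying the hypothesis USHD there, and then transporting the resulting witness back to $B$ via existential quantification over preimages, supplemented by auxiliary $S$-sorted parameters handled in the style of Remark~\ref{r:distDist}.

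First, given $a\in A$ and $B_0\subseteq_\fin B$ with $|B_0|\geq 2$, set $\widetilde B_0 := f^{-1}(B_0) \subseteq \widetilde B$. Since every $c\in B_0\subseteq B\subseteq \im f$ has between one and $N$ preimages, $\widetilde B_0$ is finite with $|B_0|\leq|\widetilde B_0|\leq N|B_0|$, and in particular $|\widetilde B_0|\geq 2$. Applying the hypothesis USHD at $(a,\widetilde B_0)$ yields $\widetilde d\in\widetilde B_0^k$ (where $k:=|\widetilde z|$) with $\zeta'(a,\widetilde d)$ and $\zeta'(x,\widetilde d)\vdash\tp_{\phi(x;f(z))}(a/\widetilde B_0)$. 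Because $\phi(x,c)\equiv\phi(x,f(\widetilde c))$ for any preimage $\widetilde c\in f^{-1}(c)$, and each $c\in B_0$ has such a preimage in $\widetilde B_0$, this rewrites as $\zeta'(x,\widetilde d)\vdash\tp_\phi(a/B_0)$.

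Now set $d:=(f(\widetilde d_1),\ldots,f(\widetilde d_k))\in B_0^k$. The naive candidate
\[
  \zeta_0(x;z) := \exists \widetilde y_1,\ldots,\widetilde y_k\,\Bigl(\bigwedge_j f(\widetilde y_j) = z_j \wedge \zeta'(x,\widetilde y)\Bigr)
\]
satisfies $\zeta_0(a,d)$ via the witness $\widetilde y:=\widetilde d$, but $\zeta_0(x,d)$ need not entail $\tp_\phi(a/B_0)$: a different preimage $\widetilde y'\neq\widetilde d$ of $d$ can witness $\zeta_0(x',d)$ for some $x'$ whose $\zeta'$-cell carries a different $\phi$-type. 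To rule such $x'$ out, I enlarge the formula with auxiliary $S$-sorted parameter pairs $(\bar w,\bar w')$ of fixed length, using the equality/inequality packing trick of Remark~\ref{r:distDist} (available since $|B_0|\geq 2$) to impose conjuncts of the form $\phi(x,z_j)\leftrightarrow(w_l=w'_l)$; choosing $\bar w,\bar w'$ so the activated pattern encodes $(\phi(a,d_j))_j$, the resulting formula forces $x$'s $\phi$-pattern at $\{d_j\}$ to match $a$'s.

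Verification then reduces to showing that any $x'$ realizing the augmented formula at $(d,\bar e,\bar e')$ satisfies $\tp_\phi(x'/B_0) = \tp_\phi(a/B_0)$: the pattern on $\{d_j\}$ is forced by construction, and extending to the rest of $B_0$ is accomplished by applying the USHD of $\zeta'$ to $(x',\widetilde B_0)$ to produce a USHD witness for $x'$ itself, then using local $\phi$-compatibility on $\{d_j\}$ together with the uncut property to identify the resulting cell with that of $\widetilde d$. The principal obstacle is the absence of a canonical $\Lang$-enumeration of the finite fibres $f^{-1}(c)$: the preimage selection must be driven implicitly through $\phi$-values of $x$ rather than by explicit indexing, and the combinatorial check that the finitely many packaged cases suffice to pin down the correct cell --- in particular that local matching on $\{d_j\}$ propagates through the USHD to full $\phi$-type agreement over $B_0$ --- is where the technical content of the argument concentrates.
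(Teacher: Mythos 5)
Your diagnosis of the difficulty is right: the naive existential pullback $\zeta_0(x;d)$ is not uncut because a different preimage $\widetilde y'$ of $d$ can carry a different $\phi$-type, and some extra parameters are needed to discriminate. But the fix you propose does not close the gap, and the place where you locate the remaining ``technical content'' is exactly where the argument fails.

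You constrain the $\phi$-pattern of $x$ only at the coordinates $d_1,\ldots,d_k$ of $d$ (the tuple $k=|\widetilde z|$ arising from the original USHD). This is too weak: two preimages $\widetilde d$ and $\widetilde d'$ of $d$ can produce cells $\zeta'(L,\widetilde d)$ and $\zeta'(L,\widetilde d')$ which agree about $\phi(x,d_j)$ for every $j$ but such that $\zeta'(L,\widetilde d')$ is cut by $\phi(L,b)$ for some $b\in B_0\setminus\{d_1,\ldots,d_k\}$. A point $x'$ of $\zeta'(L,\widetilde d')$ on the wrong side of that cut still satisfies your augmented formula yet has $\tp_\phi(x'/B_0)\neq\tp_\phi(a/B_0)$. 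Your proposed rescue --- apply the USHD again to $x'$ over $\widetilde B_0$ and ``identify the resulting cell with that of $\widetilde d$ via local $\phi$-compatibility on $\{d_j\}$'' --- begs the question: the fresh witness for $x'$ need not be a preimage of $d$ at all, and agreement of two cells at the $k$ points $\{d_j\}$ gives no control over their behaviour at the rest of $B_0$.

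The paper's proof uses a different device. After obtaining $\widetilde d$ and setting $d=f(\widetilde d)$, it does \emph{not} restrict attention to the coordinates of $d$. Instead it considers constraint formulas
$\theta_{n,\overline\eps}(w,d,\overline b):=\bigl(f(w)=d \wedge \forall x.(\zeta(x,w)\to\bigwedge_{i<n}\phi(x,b_i)^{\eps_i})\bigr)$
with $\overline b$ ranging over \emph{arbitrary} tuples from $B_0$, and chooses $(M,\overline\eps_0,\overline b_0)$ so that $\theta_{M,\overline\eps_0}(w,d,\overline b_0)$ holds of $\widetilde d$ and has the minimum possible number of realisations (in the fibre over $d$). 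The bound $M<N^{|w|}$ comes from the fibre size: each strictly refining constraint cuts down a set of size at most $N^{|w|}$. The minimality --- not local agreement --- is what forces the resulting cell $\exists w.(\theta_{M,\overline\eps_0}(w,d,\overline b_0)\wedge\zeta(x,w))$ to be uncut by every $\phi(x,b)$, $b\in B_0$: if it were cut, one more constraint would strictly shrink the realisation set, contradicting minimality. So the constraint parameters are freshly chosen elements of $B_0$, their number is governed by the fibre bound $N^{|w|}$ rather than by $|\widetilde z|$, and the correctness proof is a minimality/pigeonhole argument rather than a propagation argument. This is the idea your proposal is missing.
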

\begin{proof}
  Say $\zeta(x,w)$ is a USHD for $\phi(x;f(z))$ over $\widetilde {B}$.

  Let $B_0 \subseteq_{\fin}  B$ and $a \in A$. Then $\widetilde {B}_0 := f^{-1}(B_0)$ is a finite subset 
  of $\widetilde {B}$, so there is $\widetilde {d}$ such that $\tp(a/\widetilde {B}_0) \ni \zeta(x,\widetilde {d}) \vdash  
  \tp_{\phi(x;f(z))}(a/\widetilde {B}_0) \vdash  \tp_{\phi(x;y)}(a/B_0)$.

  \newcommand{\epsbar}{\overline{\eps}}
  Let $d := f(\widetilde {d})$. Then $|f^{-1}(d)| \leq  N^{|w|}$, and so there is $l < 
  N^{|w|}$ and $\epsbar_0 \in \{0,1\}^l$ and $\b_0 \in (B_0)^l$ such that
  $\theta_{l,\epsbar_0}(w,d,\b_0)$ has the minimal number of realisations 
  amongst the formulas
  $$\theta_{n,\epsbar}(w,d,\b) := (f(w) = d \wedge \forall x. (\zeta(x,w) \rightarrow  
  \bigwedge_{i=1}^n \phi(x,b_i)^{\eps_i}))$$
  which hold of $\widetilde {d}$, with $n \in \N$ and $\epsbar \in \{0,1\}^n$ and $\b \in 
  (B_0)^n$. The bound $l < N^{|w|}$ follows from the observation that if such 
  a formula does not have the minimal number of realisations, then a single 
  new instance of $\phi$ can be added to reduce the number of realisations.
  By the minimality, we have for any $b \in B_0$ that 
  $\theta_{l,\epsbar_0}(w,d,\b_0) \vdash  \forall x. (\zeta(x,w) \rightarrow  
  \phi(x,b)^\eps)$ for some $\eps \in \{0,1\}$.

  So $\tp(a/B_0) \ni \exists w. (\theta_{l,\epsbar_0}(w,d,\b_0) \wedge \zeta(x,w)) 
  \vdash  \tp_{\phi(x;y)}(a/B_0)$. Coding the finitely many such formulas with $l < 
  N^{|w|}$ and $\epsbar_0 \in \{0,1\}^l$ into a single formula,
  we therefore obtain a USHD for $\phi(x;y)$ on $A$ over $B$.

\end{proof}

\begin{remark}
  The finiteness assumption in Lemma~\ref{l:dd-feq} is necessary.
  Consider for example the structure $(X,O_X;<)$ where $X$ is a set, $O_X$ is 
  the set of linear orders on $X$, and $x<_ox'$ is the corresponding ternary 
  relation. Let $\pi_1 : X \times O_X \rightarrow  X$ be the projection.
  As one may see by considering automorphisms, the induced structure on $X$ is 
  trivial, so $x = y$ has no USHD on $X$ over $X$.
  But $x = \pi_1(z)$ has a USHD on $X$ over $X\times O_X$ (since if $X_0 \subseteq_{\fin}  
  X$ and $o \in O_X$, then $\tp_=(x/X_0)$ is implied by the $<_o$-cut of $x$ in 
  $X_0$).
\end{remark}


\subsection{Remarks}
We add some further remarks concerning these definitions, which will not be 
used subsequently.

\begin{remark} \label{r:distalElem}
  Suppose $A$ is distal in an $\Lang$-structure $\M$.
  Then this is expressed in the $\Lang_P$-theory of $(\M;A)$, where $P$ is a 
  new predicate interpreted as $A$;
  i.e.\ if $(\M';A') \equiv  (\M;A)$,
  then $A'$ is distal in $\M'$.
\end{remark}

\begin{remark}
  By \cite[Theorem~21]{CS-extDefII}, $\Th(\M)$ is distal if and only if $\M$ 
  is distal in $\M$.
  (No saturation assumption is needed here, thanks to
  Remark~\ref{r:distalElem}.)
\end{remark}

\begin{remark}
  Distality in $\M$ of a subset $A \subseteq  \M$ is equivalent to distality of the 
  induced structure $(A;(\phi(A)_{\phi \text{ an $\L$-formula}}))$ if this 
  structure admits quantifier elimination, but in general is much weaker. We 
  could say that distality of a subset means that it has ``quantifier-free 
  distal induced structure''.
\end{remark}

\begin{example} \label{e:indSeq}
  If $A = (a_i)_{i \in I} \subseteq  \M$ is an $\emptyset $-indiscernible sequence which is not 
  totally indiscernible, and this is witnessed by an $\Lang$-formula 
  $\theta_<$ with $\M \vDash  \theta_<(a_i,a_j) \Leftrightarrow  i<j$,
  then $A$ is distal in $\M$.
\end{example}

\begin{remark}
  The argument of \cite{CS-extDefII} to obtain uniformity of honest 
  definitions goes through in this setting.
  Namely, if $A$ is a subset of a model $\M$ of a complete NIP $\Lang$-theory 
  $T$,
  and the $\Lang_P$-structure $(\M;A)$ is $|T|^+$-saturated,
  then $A$ is distal in $\M$ if and only if
  for any singleton $a \in A$ and any subset $B \subseteq  A$, $\tp^{\M}(a/B)$ is 
  compressible in the sense of \cite{simon-decomposition}. This follows from a 
  ``$(p,q)$-argument'' and transitivity of compressibility.


  It follows in particular that Example~\ref{e:indSeq} can be generalised slightly 
  when $\M$ is NIP: any $\emptyset $-indiscernible sequence which is not totally 
  indiscernible is distal in $\M$.
\end{remark}

\begin{question}
  The following question was asked by Hrushovski and Pillay. By a result of 
  Simon, an NIP theory is distal if and only if every generically stable 
  Keisler measure is smooth. Does a version of this result go through for 
  distality of subsets of NIP structures? Is $A$ distal in $\M$ if and only if  
  every generically stable Keisler measure on $\Th_{\Lang_P}(\M,A)$ with 
  $\mu(\neg P)=0$ is smooth?
  This might provide an alternative route to Theorem~\ref{t:fin-res-distal}.
\end{question}

\section{Fields admitting valuations with finite residue field}
By classical results in valuation theory, a valuation on a field $K$ can be 
extended to any finite extension of $K$ with a finite extension of the residue 
field \cite[Theorem~3.1.2, Corollary~3.2.3]{EnglerPrestel}, and can be 
extended to the transcendental extension $K(X)$ without extending the residue 
field \cite[Corollary~2.2.3]{EnglerPrestel}. Since $\F_p$ and $\Q$ admit 
valuations with finite residue field (respectively trivial and $p$-adic), we 
inductively obtain:\

\begin{lemma} \label{l:fgRes}
  Let $K$ be a finitely generated field.
  Then $K$ admits a valuation with finite residue field.
\end{lemma}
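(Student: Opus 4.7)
The plan is a straightforward induction, essentially spelled out by the remarks preceding the lemma. Let $K$ be a finitely generated field. Split into cases by characteristic.

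In characteristic $0$, $K$ is a finite extension of a purely transcendental extension $\Q(X_1,\ldots,X_n)$ of $\Q$ for some $n\geq 0$. I start with the $p$-adic valuation $v_p$ on $\Q$ (for some fixed prime $p$), whose residue field is $\F_p$, hence finite. I then apply the cited Corollary~2.2.3 of \cite{EnglerPrestel} repeatedly, extending through $\Q(X_1)\subseteq \Q(X_1,X_2)\subseteq \cdots \subseteq \Q(X_1,\ldots,X_n)$, at each step obtaining a valuation on the next purely transcendental extension whose residue field is still $\F_p$. Finally, I invoke Theorem~3.1.2 and Corollary~3.2.3 of \cite{EnglerPrestel} to extend along the finite extension $\Q(X_1,\ldots,X_n)\subseteq K$; the result is a valuation on $K$ whose residue field is a finite extension of $\F_p$, hence finite.

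In characteristic $p>0$, $K$ is a finite extension of a purely transcendental extension $\F_p(X_1,\ldots,X_n)$ of $\F_p$ for some $n\geq 0$. (If $n=0$, $K$ is already finite and admits the trivial valuation with residue field $K$.) I start with the trivial valuation on $\F_p$, whose residue field is $\F_p$, and proceed identically: extend through the purely transcendental tower by Corollary~2.2.3 of \cite{EnglerPrestel} keeping residue field $\F_p$, then extend along the finite extension to $K$ using Theorem~3.1.2 and Corollary~3.2.3, ending with a valuation on $K$ whose residue field is a finite extension of $\F_p$.

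There is no real obstacle here — the two extension theorems cited do all the work, and the only thing to verify is that $K$ has the stated shape, which is standard (any finitely generated field is a finite extension of a purely transcendental extension of its prime field). The only minor care needed is to avoid circularity in the inductive formulation: I do the induction on the transcendence degree of the purely transcendental tower over the prime field, and treat the final finite extension as a single last step rather than folding it into the induction.
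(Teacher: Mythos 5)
Your proposal is correct and follows essentially the same approach as the paper: both start from the $p$-adic valuation on $\Q$ (or the trivial valuation on $\F_p$) and apply the same two extension results from Engler--Prestel, for purely transcendental and for finite extensions respectively. The paper just says ``we inductively obtain'' without spelling out the induction; your version, organizing the tower as transcendence basis first and a single finite extension at the end, is a clean explicit formulation of the same argument.
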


If $K$ is a valued field of characteristic $p>0$, then the induced valuation 
on the algebraic part $K\cap \F_p^\alg$ is trivial. So a positive 
characteristic field which admits a valuation with finite residue field has 
finite algebraic part.
However, the converse fails.

\begin{proposition} \label{p:latentFields}
  For any prime $p$,
  there exists an algebraic extension $L \geq  \F_p(t)$ such that $L\cap 
  \F_p^\alg = \F_p$ but no valuation on $L$ has finite residue field.
\end{proposition}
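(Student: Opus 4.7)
The plan is to construct $L$ as a direct limit $L = \bigcup_k L_k$ of finite extensions of $\F_p(t)$, each with constant field $\F_p$, where at each stage we force a prescribed residue field to grow. To set this up, enumerate the pairs $(v, \ell)$ with $v$ a non-trivial place of $\F_p(t)$ and $\ell$ a prime, so that every pair appears infinitely often. Starting from $L_0 = \F_p(t)$, I build $L_k$ from $L_{k-1}$ by treating the $k$-th pair $(v,\ell)$.

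The inductive step is the heart of the proof. Given $L_{k-1}$ with constant field $\F_p$, let $P_1,\dots,P_r$ be the places of $L_{k-1}$ lying above $v$. For each $i$, fix an irreducible polynomial $h_i \in k_{P_i}[Y]$ of degree $\ell$, namely the minimal polynomial over $k_{P_i}$ of a generator of a degree-$\ell$ extension of $k_{P_i}$. Pick an auxiliary place $Q$ of $L_{k-1}$ not lying over $v$; this exists because a function field over a finite field has infinitely many places. Using weak approximation in $L_{k-1}$, I find a monic polynomial $\hat h(Y) = Y^\ell + \sum_{j<\ell} a_j Y^j \in L_{k-1}[Y]$ whose coefficients reduce modulo each $P_i$ to the coefficients of $h_i$ and which is Eisenstein at $Q$ (that is, $v_Q(a_j) \geq 1$ for $j<\ell$ and $v_Q(a_0) = 1$). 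Eisenstein makes $\hat h$ irreducible, so $L_k := L_{k-1}[Y]/(\hat h)$ has degree $\ell$ over $L_{k-1}$; the residue conditions ensure that the reduction of $\hat h$ modulo each $P_i$ is the irreducible polynomial $h_i$, whence there is a unique place of $L_k$ above $P_i$ with residue field $k_{P_i}[Y]/(h_i) \supseteq \F_{p^\ell}$.

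The main obstacle is to verify that the constant field of $L_k$ remains $\F_p$. Let $F := L_k \cap \Fpalg$; then $F \supseteq \F_p$ and $[F:\F_p]$ divides $[L_k : L_{k-1}] = \ell$, so either $F = \F_p$ or $[F:\F_p] = \ell$. In the latter case $L_k = L_{k-1} \cdot F$ would be a pure constant-field extension, hence unramified at every place of $L_{k-1}$; but Eisenstein forces $L_k / L_{k-1}$ to be totally ramified at $Q$, a contradiction. So $F = \F_p$. Finally, $L := \bigcup_k L_k$ is algebraic over $\F_p(t)$ with $L \cap \Fpalg = \F_p$, and any non-trivial valuation on $L$ restricts to a non-trivial place $v_0$ of $\F_p(t)$; by construction, the residue field of $L$ at this valuation contains $\F_{p^\ell}$ for every prime $\ell$, hence equals $\Fpalg$, so in particular it is infinite.
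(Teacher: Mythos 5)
Your proof is correct, and it takes a genuinely different route from the paper's. The paper kills valuations one at a time using Artin--Schreier extensions: at each stage, given a valuation $v$ with finite residue field on $K_i$, it adjoins $\wp^{-1}(\bar\alpha + t^a)$ for a suitable $a$, forcing the residue field of any extension of $v$ to grow while preserving $K'' \cap \Fpalg = \F_p$ via a degree/coset-counting argument (Artin--Schreier cosets of the $t^a$). You instead enumerate pairs (place of $\F_p(t)$, prime $\ell$), handle \emph{all} places of $L_{k-1}$ over the given base place simultaneously via weak approximation, and use an Eisenstein condition at an auxiliary place both to force irreducibility and to preserve the constant field via the ramification/unramified-constant-extension dichotomy. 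Both are valid recursive constructions; yours relies on more classical function-field machinery (weak approximation, Eisenstein, constant-field extension theory), while the paper's is more tailored to the Artin--Schreier obstruction that the relevant proof of the main theorem (Lemma~\ref{l:B-rel-distal-local}) also turns on.

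Two small points worth noting. First, the final implication ``contains $\F_{p^\ell}$ for every prime $\ell$, hence equals $\Fpalg$'' is not quite right as stated (containing all $\F_{p^\ell}$ with $\ell$ prime only gives the squarefree-degree part); but all you need is that the residue field is infinite, which already follows, and in fact since you arrange each pair to appear infinitely often, the degree of the residue field over $\F_p$ is divisible by $\ell^n$ for every $n$, which \emph{does} give $\Fpalg$ if you want it. Second, you should remark that a non-trivial valuation on $L$ restricts non-trivially to $\F_p(t)$ (true because $L/\F_p(t)$ is algebraic), so that the trivial valuation, whose residue field is $L$ itself, is the only case you are not treating; this is harmless since $L$ is infinite.
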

\begin{proof}
  We work in an algebraic closure $\F_p(t)^\alg$ of $\F_p(t)$.
  Let $\wp : \F_p(t)^{\alg} \rightarrow  \F_p(t)^\alg$ be the Artin-Schreier map 
  $\wp(x) := x^p-x$, an additive homomorphism with kernel $\F_p$.

  \begin{claim*}
    $\deg(\F_p(t,(\wp^{-1}(t^a))_{a > 0}) / \F_p(t))$ is infinite.
  \end{claim*}
  \begin{proof}
    By \cite[Theorem~8.3]{Lang-algebra},
    it suffices to see that $\{ t^a | a > 0\}$ is not contained in any finite 
    union of additive cosets of $\wp(\F_p(t))$.
    Let $a,b \in \N \setminus p\N$ be distinct. Let $\beta_{a,b} := \sum_{i \geq  0} 
    (t^{ap^i} - t^{bp^i}) \in \F_p[[t]]$. Then $\wp(\beta_{a,b}) = t^b - t^a$.
    Now $\beta_{a,b} \notin \F_p(t)$, since there are arbitrarily long intervals 
    between exponents with non-zero coefficient in this power series.
    So $(t^a)_{a \in \N \setminus p\N}$ lie in distinct cosets of $\wp(\F_p(t))$.
  \end{proof}

  We write $\res$ for the residue field map associated to a chosen valuation 
  $v$ on a field $K$, and $\res(K)$ for the corresponding residue field.

  \begin{claim*}
    Let $K' \geq  K \geq  \F_p(t)$ be finite field extensions, and suppose $K' \cap 
    \F_p^\alg = \F_p$.
    Let $v$ be a valuation on $K$ with $\res(K)$ finite.

    Then there exists a finite field extension $K'' \geq  K'$ such that
    $K'' \cap \F_p^\alg = \F_p$
    but for any extension of $v$ to $K''$, $\res(K'') \gneq  \res(K)$.
  \end{claim*}
  \begin{proof}
    The valuation $v$ is non-trivial, so say $v(s) > 0$.
    So $v$ induces the $s$-adic valuation on $\F_p(s) \leq  K$.
    Now $s$ is transcendental, so $t$ is algebraic over $s$, so $K$ is also a 
    finite extension of $\F_p(s)$.
    So we may assume without loss that $v$ restricts to the $t$-adic valuation 
    on $\F_p(t)$.

    Since $\res(K)$ is finite, it is not Artin-Schreier closed;
    say $\alpha \in \res(K) \setminus \wp(\res(K))$.
    Say $\res(\bar\alpha) = \alpha$.

    Since $\deg(K'/\F_p(t))$ is finite, it follows from the above Claim that 
    $$\deg(K'(\wp^{-1}(\bar\alpha),(\wp^{-1}(\bar\alpha + t^a))_{a > 0})/K')$$ 
    is infinite.
    So say $a>0$ is such that $K'' := K(\wp^{-1}(\bar\alpha+t^a)) \not\subseteq  
    K'(\F_{p^p})$.
    Then by considering degrees, $K'' \cap \F_p^\alg = \F_p$.
    But for any extension of $v$ to $K''$,
    $$\wp(\res(\wp^{-1}(\bar\alpha+t^a))) = \res(\bar\alpha+t^a) = \alpha \notin 
    \wp(\res(K)),$$
    so $\res(K'') \gneq  \res(K)$.
  \end{proof}

  Now we recursively construct a chain $K_0 := \F_p(t) \leq  K_1 \leq  \ldots $
  of finite extensions of $\F_p(t)$.
  Let $\eta : \omega \times \omega \rightarrow  \omega$ be a bijection such that 
  $\eta(i,j) \geq  i$ for all $i,j$.

  Note that $\F_p(t)$ admits only countably many valuations (identifying a 
  valuation with its valuation ring); indeed, as above, each non-trivial 
  valuation is a finite extension of the $s$-adic valuation on some $\F_p(s) 
  \leq  \F_p(t)$; there are only countably many choices for $s$, and only 
  finitely many ways to extend a valuation to a finite extension 
  (\cite[Theorem~3.2.9]{EnglerPrestel}).
  Hence also there are also only countably many valuations on each $K_i$.
  Once $K_i$ is constructed, let $\{v_{i,j} : j \in \omega\}$ be the set of 
  valuations on $K_i$ with finite residue field.

  Suppose $k=\eta(i,j)$ and $K_k$ has been constructed. Let $K_{k+1} \geq  K_k$ 
  be an extension as in the second Claim for the extensions $K_k \geq  K_i \geq  
  \F_p(t)$ and the valuation $v_{i,j}$ on $K_i$.

  Now let $K_\omega := \bigcup_{k < \omega} K_k$.
  We have $K_\omega \cap \F_p^\alg = \F_p$ since this holds for each $K_k$.

  Suppose $v$ is a valuation on $K_\omega$ with finite residue field.
  Then $\res(K_\omega) = \res(K_i)$ say, and the restriction of $v$ to $K_i$ 
  is $v_{i,j}$ say.
  Then $\res(K_{\eta(i,j)+1}) = \res(K_i)$, contradicting the construction.
\end{proof}

\begin{remark}
  One might expect that a Zorn argument could replace the recursive 
  construction of the previous Proposition, i.e.\ that any maximal regular 
  extension of $\F_p(t)$ has no valuation with finite residue field. But 
  $\F_p((t^\Q)) \cap \F_p(t)^\alg$ is a counterexample. Thanks to Zoé 
  Chatzidakis for pointing this out.
\end{remark}

\section{Distality in $\ACVF$ of subfields with finite residue field}
\subsection{Uniform Swiss cheese decompositions}
Let $L$ be a non-trivially valued algebraically closed field.
Write $v$ for the valuation map and $\res$ for the residue field map.
We consider $L$ as an $\Ldiv := \{+,-,\cdot,|,0,1\}$-structure, where $x|y \Leftrightarrow  
v(x) \leq  v(y)$; by a result of Robinson, $L$ has quantifier elimination in this 
language.
An open resp.\ closed \defn{ball} in $L$ is a definable set of the form $\{ x : 
v(x-a) > \alpha \}$ resp.\ $\{ x : v(x-a) \geq  \alpha \}$, where $a \in L$ and 
$\alpha \in v(L) \cup \{-\infty,+\infty\}$.

\begin{fact}[Canonical Swiss cheese decomposition {\cite[Theorem~3.26]{Holly}}] \label{f:swiss}
  Any boolean combination of balls can be represented as a finite disjoint 
  union of ``Swiss cheeses'' $\bigcupdot_{i<k} (b_i \setminus \bigcupdot_{j<k_i} 
  b_{ij})$, where the $b_i$ are balls, each $b_{ij}$ is a proper sub-ball of 
  $b_i$, for each $i$ the $b_{ij}$ are disjoint, and no $b_i$ is equal to any 
  $b_{i'j}$. This representation is unique up to permutations.
\end{fact}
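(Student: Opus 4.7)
The plan is to prove existence by induction via disjunctive normal form, and uniqueness through an intrinsic characterisation of the outer balls. The underlying tool throughout is the ultrametric tree property of balls in $L$: any two balls are either disjoint or one contains the other; in particular, a non-empty intersection of finitely many balls equals the smallest of them.

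For existence, decompose the boolean combination of balls $B_1, \ldots, B_n$ into the disjoint atoms of the boolean algebra it generates; each such atom has the form $S_k = \bigcap_{i \in I_k} B_i \setminus \bigcup_{j \notin I_k} B_j$. By the ultrametric property, $\bigcap_{i \in I_k} B_i$ is either empty or the smallest $B_i$ appearing, call it $c_k$; among the $B_j$ with $j \notin I_k$, only those properly contained in $c_k$ contribute, and I can keep only the \emph{maximal} ones among these, which are then automatically pairwise disjoint. So each non-empty $S_k$ is already a Swiss cheese. To glue the $S_k$ into a single decomposition satisfying the normalisation conditions, iteratively merge overlapping Swiss cheeses: if $c_{k'} \subsetneq c_k$, then $c_{k'}$ is disjoint from $S_k$ and hence contained in some hole $b_{kj^*}$ of $S_k$, so I replace $b_{kj^*}$ by the (finitely many) holes sitting inside it, namely those of $S_{k'}$ together with those of any other Swiss cheese contained in $b_{kj^*}$. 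The procedure terminates because the total number of balls strictly decreases at each step, and it produces the required condition that no outer ball equals any hole.

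For uniqueness, I would characterise the outer balls of a set $S$ intrinsically as the maximal balls $b \subseteq L$ such that $b \setminus S$ is a finite disjoint union of proper sub-balls of $b$. The normalisation ``no $b_i$ equals any $b_{i'j}$'' is exactly what enforces this maximality: otherwise the hole $b_{i'j}$ could be filled in by the sub-holes of $b_i$, yielding a strictly larger candidate outer ball. Once the outer balls are pinned down, each hole $b_{ij}$ is determined as a maximal proper sub-ball of $b_i$ disjoint from $S$. I expect the main obstacle to be articulating the correct maximality statement cleanly, since outer balls are generally not themselves subsets of $S$; verifying that the maximal-almost-contained condition is equivalent to the axioms of the statement requires a careful induction on the finite tree of balls appearing in any given representation.
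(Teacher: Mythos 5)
The paper offers no proof of this statement: it is imported verbatim as a black box from Holly (Theorem~3.26 there), so your attempt can only be judged on its own merits. The existence half has the right skeleton --- by the ultrametric chain property each non-empty atom of the boolean algebra generated by the given balls is a single cheese, and the atoms are disjoint --- but the merging step is wrong as stated. The trigger is wrong: nested rounds are perfectly compatible with the normalisation (e.g.\ $(b_1\setminus b_2)\cup(b_3\setminus b_4)$ with $b_4\subsetneq b_3\subsetneq b_2\subsetneq b_1$ is already canonical); the only defect to repair is a round that \emph{equals} a hole. And the replacement rule changes the set unless some round equals $b_{kj^*}$ exactly: for $S=(b_1\setminus b_2)\cup(b_2\setminus b_3)\cup(b_4\setminus b_5)$ with $b_5\subsetneq b_4\subsetneq b_3\subsetneq b_2\subsetneq b_1$, your rule replaces the hole $b_2$ by $\{b_3,b_5\}$, which are not even disjoint, and yields $b_1\setminus b_3\neq S$. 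The correct merge deletes only the one cheese whose round coincides with the hole and substitutes its holes; cheeses sitting deeper inside must be left alone.

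The uniqueness half has the more serious gap: the proposed intrinsic characterisation of the rounds is false, because cheeses can nest inside the holes of other cheeses. In the example $S=(b_1\setminus b_2)\cup(b_3\setminus b_4)$ above, $b_1$ is a round of the canonical form, yet $b_1\setminus S=(b_2\setminus b_3)\cup b_4$ is not a disjoint union of balls, so $b_1$ fails your condition; and ``maximal'' cannot be the right modifier since $b_3$ is also a round and $b_3\subsetneq b_1$. Likewise the hole $b_2$ is not disjoint from $S$, so your description of the holes fails too. The usual repair is to call a ball $b$ \emph{full} if $b\setminus S$ is \emph{contained in} a finite union of proper sub-balls of $b$, and \emph{empty} if $b\cap S$ is; one proves every ball is exactly one of the two, that the rounds are the full balls separated from every larger full ball by an empty one, and that the holes of a round are its maximal empty sub-balls. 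This dichotomy, and indeed the termination and disjointness claims in your existence argument, all rest on the fact that a ball in an algebraically closed valued field is not a finite union of proper sub-balls (the residue field is infinite). Your proof never invokes this, and it cannot be dispensed with: over $\Q_2$ one has $\Z_2=2\Z_2\cup(1+2\Z_2)$, two distinct representations both satisfying every condition in the statement, so uniqueness genuinely fails without it.
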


We call the $b_i$ the ``rounds'' and the $b_{ij}$ the ``holes'' of a Swiss 
cheese decomposition, and we say such a decomposition has \defn{complexity} 
$\leq N$ if there are $k\leq N$ rounds each with $k_i\leq N$ holes.
Let $\phi(x,y)$ be an $\Ldiv$-formula with $|x| = 1$.
For any $a \in L$, it follows directly from quantifier elimination that 
$\phi(L,a)$ is a boolean combination of balls. We will need the following form 
of uniformity in $a$ of the Swiss cheese decompositions.

\begin{lemma} \label{l:ACVFQE-uniformity}
  There are $N$ and $d$ depending only on $\phi$ such that for all $a \in L^y$,
  $\phi(L,a)$ has a Swiss cheese decomposition of complexity $\leq N$, each round 
  and each hole of which contains a point in a field extension of the subfield 
  generated by $a$ of degree dividing $d$.
\end{lemma}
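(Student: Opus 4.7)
The plan is to combine Robinson's quantifier elimination for $\ACVF$ with a root-based analysis of the basic valuative sets, and then invoke Fact~\ref{f:swiss} as a black box.

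By QE, $\phi(x,y)$ is equivalent to a boolean combination of finitely many atomic $\Ldiv$-formulas, namely $P_j(x,y) = 0$ and $P_j(x,y)\mid Q_j(x,y)$ (i.e.\ $v(P_j)\leq v(Q_j)$), with $P_j,Q_j\in\Z[x,y]$ of $x$-degree bounded by some $D$ depending only on $\phi$. Set $d := D!$. For any $a\in L^y$, every root in $L$ of any $P_j(x,a)$ or $Q_j(x,a)$ has degree over $\F(a)$ (for $\F$ the prime field) dividing $d$.

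The key technical step is to show that each atomic set $\{x\in L : v(P_j(x,a))\leq v(Q_j(x,a))\}$ is a boolean combination of at most $M$ balls, for $M$ depending only on $D$, with centers among the roots of $P_j(x,a)Q_j(x,a)$ in $L$. To see this, factor both polynomials over $L$ and partition $L$ according to the ultrametric tree structure of the finite root set $\Gamma$: on each maximal ``region'' (itself an intersection of balls centered in $\Gamma$), the function $v(P_j) - v(Q_j)$ is affine in $v(x-\gamma)$ for any reference root $\gamma\in\Gamma$, with integer slope of absolute value at most $D$. Hence the condition $\leq 0$ intersected with each region is empty, the whole region, or a sub-ball centered at $\gamma$, and the total number of regions is polynomial in $|\Gamma|\leq 2D$.

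Combining across the atomic subformulas, $\phi(L,a)$ is, uniformly in $a$, a boolean combination of at most $N_0$ balls whose centers are algebraic over $a$ of degree dividing $d$. Now, by ultrametricity, each nonempty atom of the boolean algebra generated by $N_0$ balls is itself a Swiss cheese whose round and maximal holes are among the $N_0$ original balls: for any choice of signs $(\epsilon_i)_i$, the intersection $\bigcap_{\epsilon_i=1} b_i$ is either empty or equals the smallest such $b_{i^*}$, and the remaining complements carve out sub-balls of $b_{i^*}$. Taking the disjoint union of these atomic Swiss cheeses exhibits $\phi(L,a)$ as a Swiss cheese decomposition of complexity bounded by a function of $N_0$ (hence of $\phi$) alone, and each round and hole contains its center, a point algebraic over $a$ of degree dividing $d$. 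The main obstacle is the middle step: verifying the uniformly bounded ball decomposition of $\{v(P)\leq v(Q)\}$, a somewhat delicate one-variable ACVF calculation that rests on the tree-linearity of $v(P)$ for a polynomial $P$.
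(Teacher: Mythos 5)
Your proposal follows essentially the same route as the paper: quantifier elimination reduces to valuative inequalities between polynomial values, factorization over $L$ expresses each such condition as a boolean combination of balls centred at roots (your ``tree-linearity'' region argument matches the paper's induction on the number of terms $n_i v(x-\gamma_i)$ conditioned on order type), and a uniform complexity bound --- explicit tree-counting for you, compactness over a saturated model for the paper --- yields the Swiss cheese decomposition. Incidentally your choice $d := D!$ is the careful one: the degree over the field generated by $a$ of an irreducible factor of a polynomial of $x$-degree $D$ need only be $\leq D$ rather than a divisor of $D$, so the paper's $d := \lcm_i(\lcm(\deg_x f_i, \deg_x g_i))$ should really be $\lcm\{1,\ldots,D\}$ or $D!$.
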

\begin{proof}
  By quantifier elimination,
  $\phi(x,a)$ is equivalent to a boolean combination of formulas of the form
  $\phi_i(x,a) := v(f_i(x,a)) < v(g_i(x,a))$
  for polynomials $f_i,g_i \in \Z[x,y]$.

  Now if $f_i(x,a)$ is non-constant, then $v(f_i(x,a)) = \sum_{j=1}^k 
  v(x-\alpha_j)$ where $\alpha_1,\ldots ,\alpha_k \in L$ are the zeroes of 
  $f_i(x,a)$. Similarly for the $g_i$. So the following claim shows that 
  $\phi(x,a)$ is equivalent to a boolean combination of balls centred at the 
  roots of the non-constant $f_i(x,a)$ and $g_i(x,a)$.

  \begin{claim}
    Let $s \in \N$, $\gamma_1,\ldots ,\gamma_s \in L$, $n_1,\ldots ,n_s \in \Z$, and $\nu 
    \in v(L)$.
    Then the affine linear constraint $\psi(x) := \sum_{i=1}^s n_i 
    v(x-\gamma_i) < \nu$ is equivalent to a boolean combination of balls 
    centred at the $\gamma_i$.
  \end{claim}
  \begin{proof}
    We prove this by induction on $s$, the case $s=0$ being trivial.

    There are only finitely many possible order types for
    $\eps := v(x-\gamma_1)$ over $\{ v(\gamma_1-\gamma_i) : i > 1 \}$.
    Fix one such order type, and let $X$ be the set of $x$ for which $\eps$ 
    has this order type. Since $X$ is itself a boolean combination of balls 
    centred at $\gamma_1$, it suffices to show that $\phi(x)$ is equivalent on 
    $X$ to a boolean combination of balls centred at the $\gamma_i$.

    If $\eps = v(\gamma_1-\gamma_i)$ for some $i>1$, then $\psi(x)$ is 
    equivalent on $X$ to $\sum_{i=2}^s n_i v(x-\gamma_i) < \nu - 
    n_1v(\gamma_1-\gamma_i)$, and we conclude by the inductive hypothesis. 

    Otherwise, by the ultrametric triangle inequality, $v(x-\gamma_i) = \eps$ 
    if $\eps < v(\gamma_1-\gamma_i)$, and $v(x-\gamma_i) = 
    v(\gamma_1-\gamma_i)$ otherwise, so $\psi(x)$ is equivalent on $X$ to 
    $nv(x-\gamma_1) < \nu'$ or $nv(x-\gamma_1) > \nu'$ for some $n \in \Z$ and 
    $\nu' \in v(L)$.
  \end{proof}

  So $\phi(L,a)$ is a boolean combination of balls each having a point in an 
  extension of the subfield generated by $a$ of degree dividing $$d := \lcm_i 
  (\lcm(\deg_x f_i, \deg_x g_i)).$$
  Refining this boolean combination to the Swiss cheese decomposition, it 
  follows that its rounds and holes also have this property. Indeed, since no 
  ball is the union of finitely many subballs, the rounds and holes must 
  appear in any expression for $\phi(L,a)$ as a boolean combination of balls.

  We may assume $L$ is $\aleph_0$-saturated, and so by compactness we obtain a 
  uniform bound on the number of rounds and holes, as required.
\end{proof}

\subsection{Compressing cheeses}
Let $B$ be the imaginary sort of $L$ consisting of balls, both open and 
closed, including the empty ball and its complement. We write $x \in b$ for the 
corresponding $\emptyset $-definable (in $\Leq$) membership relation $( \in ) \subseteq  L 
\times B$.

Given $N \in \N$, let $S_N$ be the imaginary sort of $L$ which codes Swiss 
cheese decompositions of complexity at most $N$. This means that we have an 
associated $\emptyset $-definable membership relation, which we also write as $( \in ) 
\subseteq  L \times S_N$, such that $c_1 = c_2$ iff $\{ x : x \in c_1 \} = \{ x : x \in 
c_2 \}$, and
setting $$X_N := \{(b_i)_{i<N},(b_{ij})_{i,j<N} : b_i,b_{ij} \in B \textrm{ are 
as in Fact~\ref{f:swiss}}\} \subseteq  B^{N(N+1)}$$
we obtain a $\emptyset $-definable surjection $f_{S_N} : X_N \twoheadrightarrow  S_N$ defined by 
$f_{S_N}((b_i)_i,(b_{ij})_{ij}) := [{ \textrm{code of } \bigcup_i (b_i \setminus 
\bigcup_j b_{ij}) }]$.
By Fact~\ref{f:swiss}, any $c \in S_N$ has a unique-up-to-permutation representation 
as a Swiss cheese decomposition of complexity $\leq N$, so $f_{S_N}$ has finite 
fibres.

With a view to proving Theorem~\ref{t:fin-res-distal}, for $K$ a valued subfield of 
$L$ with finite residue field and $d \in \N$, define $B_{K,d} \subseteq  B$ to be the 
set of balls which contain an element of some finite field extension of $K$ 
within $L$ of degree dividing $d$ over $K$.
Let $X_N(B_{K,d}) := X_N \cap {B_{K,d}}^{N(N+1)}$
and $S_N(B_{K,d}) := f_{S_N}(X_N(B_{K,d}))$.

\begin{lemma} \label{l:B-rel-distal-local}
  Let $N,d \in \N$.
  \begin{enumerate}[(i)]\item $x \in y$ has a USHD over $B_{K,d}$.
  \item $x \in f_{S_N}(y)$ has a USHD over $B_{K,d}$.
  \item $x \in z$ has a USHD over $S_N(B_{K,d})$.
  \end{enumerate}
\end{lemma}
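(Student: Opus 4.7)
The plan is to prove (i) first, and then to deduce (ii) formally from (i) and (iii) from (ii) via Lemma~\ref{l:dd-feq}.

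For (i), fix $a \in L$ and $B_0 \subseteq_{\fin} B_{K,d}$. The ultrametric tree structure of balls---any two are nested or disjoint---implies that $\tp_{\in}(a/B_0)$ is determined by the smallest ball $b_0 \in B_0$ containing $a$, if any (the case with no such ball being handled symmetrically): for $b \in B_0$, one has $a \in b$ iff $b \supseteq b_0$, since otherwise either $b \subsetneq b_0$ (by minimality of $b_0$) or $b \cap b_0 = \emptyset$. I would then seek a fixed $\Ldiv$-formula $\zeta(x;\bar z)$ with $\bar z$ a bounded tuple of ball variables such that, for each $(a, B_0)$, some $\bar d \in B_0^{\bar z}$ makes $\zeta(x;\bar d)$ consistent with $a$ and contained in the cell $b_0 \setminus \bigcup\{b \in B_0 : b \subsetneq b_0\}$ of $a$. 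The $B_{K,d}$-hypothesis should enter here: each sub-ball in $B_0$ contains a $K^{\leq d}$-point, and since $K$ has finite residue field, the residues of $K^{\leq d}$-points within any given ball form a finite set whose cardinality is bounded in terms of $K$ and $d$. This finiteness constrains the configuration of $B_0$-sub-balls of $b_0$ sufficiently to permit a bounded parameterisation, for instance by selecting a few ``closest'' such sub-balls as the tuple $\bar d$ and carving out $a$ via a small definable ball around a corresponding $K^{\leq d}$-representative.

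For (ii), observe that $x \in f_{S_N}(y)$, with $y = (y_1, \ldots, y_{N(N+1)})$ a tuple of balls, is a fixed quantifier-free Boolean combination of the atomic formulas $x \in y_k$. Given any USHD $\zeta(x;\bar z)$ for $x \in y$ over $B_{K,d}$ as in (i), and given $a, B_0$, the parameters $\bar d$ that determine $\tp_{\in}(a/B_0)$ automatically determine $\tp_{x \in f_{S_N}(y)}(a/B_0)$, since the latter is a Boolean function of the former: for each $c \in B_0^{N(N+1)}$, whether $a \in f_{S_N}(c)$ depends only on which components of $c$ contain $a$. The same $\zeta$ therefore witnesses (ii). For (iii), apply Lemma~\ref{l:dd-feq} with $f = f_{S_N} : X_N \twoheadrightarrow S_N$, a $\Ldiv$-definable map with uniformly finite fibres bounded in terms of $N$ via the uniqueness-up-to-permutation statement of Fact~\ref{f:swiss}. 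Taking $B = S_N(B_{K,d})$, the same uniqueness gives $\widetilde B := f_{S_N}^{-1}(B) = X_N(B_{K,d})$; the hypothesis of the lemma---a USHD for $x \in f_{S_N}(z)$ over $\widetilde B$---is then exactly (ii), read on the sort $X_N \subseteq B^{N(N+1)}$.

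The main obstacle is step (i): the naive choice $\zeta := (x \in b_0)$ is cut by sub-balls of $b_0$ in $B_0$, and the number of such sub-balls is in general unbounded even within $B_{K,d}$. Carrying out the finite-residue-field-based argument so as to obtain a genuinely bounded tuple $\bar d$ and a single fixed formula $\zeta$ is the delicate heart of the proof.
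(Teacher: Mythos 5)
Your parts (ii) and (iii) match the paper's proof exactly: (ii) is the observation that $x\in f_{S_N}(y)$ is a fixed boolean combination of the formulas $x\in y_k$, so the USHD from (i) already implies the relevant $\phi$-type, and (iii) is precisely the application of Lemma~\ref{l:dd-feq} to the finite-to-one map $f_{S_N}$. The problem is (i), where you correctly identify the difficulty and then explicitly leave it open: ``carrying out the finite-residue-field-based argument \ldots{} is the delicate heart of the proof'' is an accurate description of what remains to be done, not a proof of it. As written, your cell $b_0\setminus\bigcup\{b\in B_0: b\subsetneq b_0\}$ needs unboundedly many parameters, and your proposed fix (``selecting a few closest sub-balls'' and ``carving out $a$ via a small ball around a $K^{\leq d}$-representative'') does not work as stated: a small ball around a representative point of some $b_i$ need not contain $a$ at all, and ``the residues of $K^{\leq d}$-points within a given ball'' is not a well-defined finite set without a normalisation.

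The two missing ideas are the following. First, replace $B_0$ by its closure $B_0'$ under pairwise joins $b\vee b'$ (still finite, still inside $B_{K,d}$ since $B_{K,d}$ is upwards closed, and closed under joins by the ultrametric inequality); then take $b$ minimal in $B_0'\cup\{L\}$ containing $p$ and exclude only the \emph{maximal} proper subballs $b_1,\ldots,b_s$ of $b$ lying in $B_0'$. This already decides $x\in b'$ for every $b'\in B_0$, and each $b_i$ (and $b$, unless $b=L$) is a join of two balls of $B_0$, hence is coded by a pair of parameters from $B_0$. Without the join-closure, the maximal subballs of $b_0$ inside $B_0$ itself can be unboundedly many (e.g.\ many small balls clustered inside a common subball not belonging to $B_0$), so this step is essential. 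Second, the bound on $s$: after join-closure, $b_i\vee b_j=b$ for $i\neq j$, so choosing $p_i\in b_i$ of degree dividing $d$ over $K$ one gets $v(p_i-p_j)=\alpha$ (the radius of $b$) for all $i\neq j$, and hence $i\mapsto\res\bigl((p_i-p_1)/(p_2-p_1)\bigr)$ is injective into the residue field of an extension of $K$ of degree dividing $d^3$; by the valuation inequality this lands in $\F_{q^{d^3!}}$, giving $s\leq q^{d^3!}$. This normalised residue map is the precise form of the finiteness you were groping for, and it is where the hypothesis $B_0\subseteq B_{K,d}$ and the finiteness of $\res(K)$ actually enter.
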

\begin{proof}
  \begin{enumerate}[(i)]\item
  By assumption, the residue field of $K$ is a finite field, say $\F_q$.

  Let $B_0 \subseteq_{\fin}  B_{K,d}$.
  Let $B_0' := \{ b \vee b' : b,b' \in B_0 \}$ where the join $b \vee b'$ is the 
  smallest ball containing both $b$ and $b'$. By the ultrametric triangle 
  inequality, $B_0'$ is then closed under join. Note that $B_0' \subseteq  B_{K,d}$, 
  since $B_{K,d}$ is upwards-closed.

  Let $p \in L$.
  Let $b \in B_0' \cup \{L\}$ be minimal such that $p \in b$,
  and let $b_1,\ldots ,b_s \in B_0'$ be the maximal proper subballs (if any) of $b$ 
  in $B_0'$.
  Then
  $$(x \in b \wedge \bigwedge_{i=1}^s x \notin b_i) \vdash  \tp_{x \in y}(p/B_0),$$
  and each $b_i$ is the join of two balls in $B_0$, and either the same goes 
  for $b$ or $b=L$. So coding the finitely many possibilities yields a USHD as 
  required if we can bound $s$ independently of $p$.

  Assume $s > 1$.
  Say $p_i \in b_i$ is of degree dividing $d$ over $K$, and let $\alpha \in 
  v(L)$ be the valuative radius of $b$.
  Then $v(p_i-p_j)=\alpha$ for $i\neq j$, since $b_i \vee b_j = b$ (in particular, 
  $b \neq  L$).

  Then $i \mapsto  \lambda_i := \res(\frac{p_i-p_1}{p_2-p_1})$ is an injection of 
  $\{1,\ldots ,s\}$ into $\res(L)$.
  Indeed, if $\lambda_i=\lambda_j$ then $\res(\frac{p_i-p_j}{p_2-p_1}) = 0$, 
  so $v(p_i-p_j) > v(p_2-p_1) = \alpha$, so $i=j$.

  Since each $\lambda_i$ is in the residue field of an extension of $K$ of 
  degree dividing $d^3$,
  by the valuation inequality
  (\cite[Corollary~3.2.3]{EnglerPrestel}).
  each $\lambda_i$ is in an extension of $\res(K) = \F_q$ of degree $\leq  d^3$,
  so $\lambda_i \in \F_{q^{d^3!}}$ for all $i$.
  So $s \leq  q^{d^3!}$.

  \item
  $x \in f_{S_N}(y)$ is equivalent, by the definition of $f_{S_N}$, to a 
  certain boolean combination of the formulas $(x \in y_i)_{i < N(N+1)}$.
  So by (i), coding these formulas yields a formula which is a USHD for $x \in 
  f_{S_N}(y)$ over $B_{K,d}$.

  \item
  Considering now $X_N$ as a sort and $y$ as a variable of sort $X_N$,
  it follows from (ii) that $x \in f_{S_N}(y)$ has a USHD over $X_N(B_{K,d})$.
  Then we conclude by Lemma~\ref{l:dd-feq}.
  \end{enumerate}
\end{proof}

\subsection{Concluding distality}
\begin{lemma} \label{l:fin-res-comp}
  Let $L$ be a non-trivially valued algebraically closed field.
  Let $K \leq  L$ be a subfield and suppose $\res(K)$ is finite.
  Let $\phi(x;y)$ be an $\Ldiv$-formula with $|x| = 1$.
  Then $\phi$ has a USHD over $K$.

  Moreover, for any $r \geq  1$, $\phi$ has a USHD over the set $K_r \subseteq  K^{\alg} 
  \subseteq  L$ of points with degree over $K$ dividing $r$,
  $$K_r := \{ a \in L : \deg(K(a)/K) | r \}$$
\end{lemma}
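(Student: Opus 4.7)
The plan is to parameterise $\phi(L,a)$ by its Swiss cheese decomposition and reduce to Lemma~\ref{l:B-rel-distal-local}(iii).

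First, by Lemma~\ref{l:ACVFQE-uniformity} there exist $N, d \in \N$ depending only on $\phi$ such that for every $a \in L^y$, $\phi(L,a)$ admits a Swiss cheese decomposition of complexity at most $N$ whose rounds and holes each contain a point of degree dividing $d$ over $K(a)$. By the canonicity in Fact~\ref{f:swiss}, sending $a$ to the equivalence class of this decomposition defines an $\emptyset$-definable function $g \colon L^y \to S_N$ satisfying $\phi(x,a) \Leftrightarrow x \in g(a)$. For $a \in K^y$ one has $K(a) = K$, so $g(a) \in S_N(B_{K,d})$; for $a \in K_r^y$ with $|y| = n$, the tower law yields $[K(a):K] \mid r^n$ and hence $g(a) \in S_N(B_{K, dr^n})$.

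Next, apply Lemma~\ref{l:B-rel-distal-local}(iii) to obtain a USHD $\zeta(x, w)$ for $x \in z$ over $S_N(B_{K,d})$ (respectively over $S_N(B_{K, dr^n})$), and define its pullback
\[\zeta'(x; y_1, \ldots, y_{|w|}) := \zeta(x; g(y_1), \ldots, g(y_{|w|})).\]
Given $B_0 \subseteq_{\fin} K$ (respectively $B_0 \subseteq_{\fin} K_r$) with $|B_0| \geq 2$ and $p \in L$, set $Z_0 := g(B_0^y)$; when $|Z_0| \geq 2$ the USHD at $Z_0$ produces parameters of the form $g(\bar a_i)$ with $\bar a_i \in B_0^y$, and the identity $\phi(x, b) \Leftrightarrow x \in g(b)$ immediately transports the isolation back, so that $\zeta'(x, \bar a_1, \ldots, \bar a_{|w|})$ lies in $\tp(p/B_0)$ and isolates $\tp_\phi(p/B_0)$.

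The one remaining case is $|Z_0| = 1$, where $\phi(x,a)$ is constant on $B_0^y$; then $\tp_\phi(p/B_0)$ is already generated by a single instance $\phi(x, a_0)^\epsilon$ for any $a_0 \in B_0^y$. Combining $\zeta'$ with $\phi$ itself via the coding trick of Remark~\ref{r:distDist} thus yields a single formula that is a USHD for $\phi$ over $K$ (respectively $K_r$) in all cases. The main work was already carried out in Lemma~\ref{l:ACVFQE-uniformity}; given its uniform Swiss cheese decomposition with rationally simple centres, the present statement reduces to a routine pullback along the definable function $g$.
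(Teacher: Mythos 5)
Your proof is correct and follows essentially the same route as the paper's: Lemma~\ref{l:ACVFQE-uniformity} yields a $\emptyset$-definable map $L^y \to S_N$ carrying tuples from $K_r$ into $S_N(B_{K,d'})$ for a suitable $d'$, and one pulls back the USHD supplied by Lemma~\ref{l:B-rel-distal-local}(iii) along this map. Your two extra touches --- using the degree bound $d r^{|y|}$ where the paper writes $dr$, and explicitly handling the case $|g(B_0^y)| = 1$ where the $|B_0| \geq 2$ clause of the USHD definition cannot be invoked downstairs --- are harmless refinements of details the paper leaves implicit.
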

\begin{proof}
  Let $N$ and $d$ be as in Lemma~\ref{l:ACVFQE-uniformity} for $\phi$.
  Then there is a $\emptyset $-definable function $h : L^y \rightarrow  S_N$ such that
  $L^{\eq} \vDash  \forall x,y. (\phi(x,y) \leftrightarrow  x \in h(y))$,
  and $h(K_r) \subseteq  S_N(B_{K,dr})$.

  By Lemma~\ref{l:B-rel-distal-local}(iii),
  say $\zeta(x,z'_1,\ldots ,z'_s)$ is a USHD for $x \in y$ over $S_N(B_{K,dr})$.
  Then (an $\Ldiv$-formula equivalent to) $\zeta(x,h(z_1),\ldots ,h(z_s))$ is a 
  USHD for $\phi(x;y)$ over $K_r$.
\end{proof}

\begin{theorem} \label{t:fin-res-distal}
  Let $K$ be a valued field with finite residue field.

  Let $L \geq  K$ be an algebraically closed valued field extension.

  Then $K$ is distal in $L$,
  as is each $K_r$ defined as in Lemma~\ref{l:fin-res-comp}.
\end{theorem}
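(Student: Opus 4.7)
The plan is to directly assemble the two reductive lemmas preceding the theorem: Lemma~\ref{l:distal1} reduces distality to the single-variable case, and Lemma~\ref{l:fin-res-comp} supplies USHDs in that case. So the proof is essentially a piece of bookkeeping, with the real work having already been done in Lemmas~\ref{l:ACVFQE-uniformity}, \ref{l:B-rel-distal-local}, and \ref{l:fin-res-comp}.

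Concretely, I would first apply Lemma~\ref{l:distal1} to reduce the problem to exhibiting, for each $\Ldiv$-formula $\phi(x;y)$ with $|x|=1$, a USHD on $K$ (respectively on $K_r$). This turns a statement about all $\Ldiv(K)$-formulas into a statement about $\Ldiv$-formulas in one variable, which is precisely the input shape expected by Lemma~\ref{l:fin-res-comp}. Next, I would invoke Lemma~\ref{l:fin-res-comp} verbatim: by hypothesis $K$ has finite residue field, $L \geq K$ is an algebraically closed valued field extension, and (under the standing non-triviality convention of the section) $L$ is non-trivially valued, so all hypotheses are met and the lemma produces the required USHD of $\phi$ over $K$, and over each $K_r$, completing the proof.

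The main obstacle in the whole development was compressing arbitrary boolean combinations of balls into Swiss cheeses bounded uniformly in the residue field size and the degree $r$ (Lemmas~\ref{l:ACVFQE-uniformity} and \ref{l:B-rel-distal-local}); at this final stage there is nothing further to overcome. The one possible subtlety, should one wish to drop the non-triviality assumption on the valuation of $L$, is that then $\res(K)=K$ is finite and hence $K$ and each $K_r$ are finite subsets of $L$, for which distality is immediate: every single-variable $\Ldiv$-formula over finite $B_0 \subseteq K$ has only boundedly many $\phi$-types realised, which may be isolated by the coding trick of Remark~\ref{r:distDist} applied to the formulas $x=z$ and $\bigwedge_i x\neq z_i$.
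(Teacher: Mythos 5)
Your proof is correct and essentially identical to the paper's: reduce to one variable via Lemma~\ref{l:distal1}, apply Lemma~\ref{l:fin-res-comp}, and dispose of the trivially valued case by noting that $K$ is then finite. One small quibble on that trivial case: the formulas $x=z$ and $\bigwedge_i x\neq z_i$ do not in general isolate $\tp_\phi(a/B_0)$ when $a\notin B_0$ (e.g.\ take $\phi(x,y):=(x^3=y)$, $a$ a generator of $\F_4^\times$ and $B_0=\F_2$: then $x\neq 0\wedge x\neq 1$ does not imply $x^3=1$ in $L$); instead one should code the instances $\phi(x,c)^{\eps}$ for $c\in B_0^y$ themselves, which works because $|B_0^y|\leq |K|^{|y|}$ is bounded when $K$ is finite.
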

\begin{proof}
  We may assume that $L$ is non-trivially valued, as otherwise $K$ is finite 
  and the result is trivial.

  The result then follows from Lemma~\ref{l:fin-res-comp} and Lemma~\ref{l:distal1}.
\end{proof}

\begin{remark}
  This does not reprove distality of $\Q_p$, because $\Q_p$ does not eliminate 
  quantifiers in $\Ldiv$.
\end{remark}

\section{Incidence theory consequences}

\begin{theorem} \label{t:SzT}
  Let $K$ be a valued field with finite residue field.
  Let $E \subseteq  K^n \times K^m$ be quantifier-free definable in $\Ldiv(K)$.
  Suppose $E$ omits $K_{d,s}$, where $d,s \in \N$.
  Then there exist $t$ (see Remark~\ref{r:bounds}) and $C>0$ such that
  for $A_0 \subseteq_{\fin}  K^n$ and $B_0 \subseteq_{\fin}  K^m$,
  $$|E \cap (A_0 \times B_0)| \leq  C(|A_0|^{\frac{(t-1)d}{td-1}} 
  |B_0|^{\frac{td-t}{td-1}} + |A_0| + |B_0|).$$

  The same holds if $K$ is replaced by $K_r \subseteq  K^{\alg}$ defined as in 
  Lemma~\ref{l:fin-res-comp}.
\end{theorem}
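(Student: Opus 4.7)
The plan is to combine Theorem~\ref{t:fin-res-distal} with Fact~\ref{f:distInc}, which have been set up precisely for this synthesis. We may assume $K$ is non-trivially valued, for otherwise $\res(K) = K$ is finite and the bound holds trivially with a large enough $C$.

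First I would pick a non-trivially valued algebraically closed extension $L \geq K$, for instance an algebraic closure together with any extension of the valuation. Let $\phi(x;y)$ be a quantifier-free $\Ldiv(K)$-formula defining $E$, with $|x|=n$ and $|y|=m$. Since quantifier-free formulas are preserved under passage from a substructure to an extension, $\phi$ makes sense over $L$ and
$$E = \phi(K^n;K^m) = \phi(L^n;L^m) \cap (K^n\times K^m).$$

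Next, by Theorem~\ref{t:fin-res-distal}, $K$ is distal in $L$, so the $\Ldiv(K)$-formula $\phi(x;y)$ admits a USHD $\zeta(x;z)$ on $K$ for some tuple $z$. By Remark~\ref{r:distDist}, the binary relation $E \subseteq K^n\times K^m$ therefore admits a distal cell decomposition whose cells are $\Ldiv$-definable, with some exponent $t$ controlled by the length of $z$ (this is where Remark~\ref{r:bounds} quantifies the value of $t$).

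Finally, since $E$ is $K_{d,s}$-free by hypothesis, Fact~\ref{f:distInc} applies directly to this cell decomposition and yields the stated bound. The case of $K_r$ in place of $K$ proceeds in exactly the same way, invoking the distality of $K_r$ in $L$ also provided by Theorem~\ref{t:fin-res-distal}. There is essentially no obstacle in this final step: all the substantive work has been done in the previous sections, and the only remaining bookkeeping is tracking the numerical value of $t$ through the construction of the USHD, which is the content of Remark~\ref{r:bounds}.
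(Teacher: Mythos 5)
Your proposal is correct and follows essentially the same route as the paper's proof, which simply cites Theorem~\ref{t:fin-res-distal}, Remark~\ref{r:distDist}, and Fact~\ref{f:distInc} in sequence; you have merely spelled out the intermediate steps (choosing the extension $L$, extracting the USHD, passing to the cell decomposition) in more detail. The preliminary reduction to the non-trivially valued case is harmless but unnecessary here, since Theorem~\ref{t:fin-res-distal} already handles the trivially valued case internally.
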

\begin{proof}
  By Theorem~\ref{t:fin-res-distal} and Remark~\ref{r:distDist},
  $E$ admits a distal cell decomposition, and we conclude by Fact~\ref{f:distInc}.
\end{proof}

The version of this stated in the introduction, Theorem~\ref{t:SzTBasic}, follows by 
considering Lemma~\ref{l:fgRes} and the special case that $E$ is defined as the zero 
set of polynomials over $K$, and setting $\epsilon := \frac1{dt-1}$.

\begin{remark} \label{r:bounds}
  By examining the proof, in the case $n=1$ one can obtain a bound on the 
  exponent of the resulting distal cell decomposition giving $t \leq  
  2(q^{d^3!}+1)$ where $q = |\res(K)|^r$ and $d$ is as in the proof of 
  Lemma~\ref{l:fin-res-comp}. Indeed, this is the exponent arising from bounding the 
  number of balls used in Lemma~\ref{l:B-rel-distal-local}(i), and neither 
  Lemma~\ref{l:B-rel-distal-local}(ii) nor Lemma~\ref{l:dd-feq} increase the exponent (for 
  the latter case, this follows from the structure of the proof, since each 
  instance $\zeta(x,\widetilde {d})$ gives rise to a single instance of the eventual 
  formula).

  So we obtain the corresponding explicit bounds in Fact~\ref{f:distInc}. However, 
  we have no reason to expect these bounds to be anything like optimal.

  For $n>1$, calculating explicit bounds is complicated by the fact that when 
  reducing to one variable a USHD for a quantified formula is used, so one 
  needs a bound on the degrees in the quantifier-free formula obtained by 
  quantifier elimination in ACVF. This quantifier elimination is primitive 
  recursive \cite{Weispfenning}, so in principle this could be done, yielding 
  an effective algorithm for computing an exponent $t$ for a given $E$ 
  (uniform in definable families). But we do not attempt to make this explicit 
  here.

  Instead, we illustrate the idea by showing that in the special case of 
  Szemerédi-Trotter, $E = \{ ((x,y),(a,b)) : y=ax+b \}$, we can take $t := 
  4(q+1)$.

\newcommand{\y}{{\overline{y}}}
\newcommand{\z}{{\overline{z}}}
\newcommand{\w}{{\overline{w}}}
  The proof of Lemma~\ref{l:B-rel-distal-local}(i) in this case gives a USHD 
  $\zeta(y,x,\z)$ for $\phi(y;x,(a,b)) := (x,y)E(a,b)$ over $K$, expressing 
  that $y$ is an element of a boolean combination of the points 
  $z_{i,1}x+z_{i,2}$ and the balls spanned by pairs of such points, with at 
  most $2(q+1)$ such points involved. Using coding to choose the form of the 
  boolean combination, this has exponent $\leq 2(q+1)$.

  By \cite[Theorem~2.1]{Weispfenning}, if an $\Ldiv$ qf-formula 
  $\psi(x,\y,\z)$ is linear in $x,\y$, i.e.\ each polynomial has degree 1 in 
  $x$ and each $y_i$, then $\exists x. \psi(x,\y,\z)$ is equivalent modulo 
  $\ACVF$ to a qf-formula linear in $\y$.

  Now the formula $\zeta(y,x,\z) \rightarrow  (x,y)Ew$ is linear in $x,y$,
  so $\forall y. (\zeta(y,x,\z) \rightarrow  (x,y)Ew)$ is equivalent to a qf-formula 
  which is linear in $x$.
  Similarly $\forall y. (\zeta(y,x,\z) \rightarrow  \neg(x,y)Ew)$ is equivalent to a 
  qf-formula linear in $x$, and the two can be coded into a single qf-formula 
  linear in $x$. This then itself admits (by the $n=1$ case of the present 
  Remark with $d=1$) a USHD $\xi(x,\w)$ over $K$ of exponent $\leq 2(q+1)$.
  Then $\xi(x,\w) \wedge \zeta(y,x,\z)$ is a USHD for $E$ over $K$ of exponent 
  $\leq 2(q+1) + 2(q+1) = 4(q+1)$.
  In symmetric form, this gives a bound of $O(N^{\frac32 - 
  \frac1{16(q+1)-2}})$ on the number of incidences of $N$ lines and $N$ 
  points.

  As a final remark, note that although Theorem~\ref{t:SzT} does apply in 
  characteristic 0, e.g.\ to $K=\Q_p$, the bounds we obtain in this way are 
  worse than those obtained from \cite{FoxEtAl} by embedding $\Q_p$ in 
  $\C=\R^2$, even for $p=2$.
\end{remark}

\begin{question}
  Is the dependence on $q$ in these bounds necessary?
  For example, does there exist $\epsilon>0$ such that for all primes $p$ 
  there exists $C$ such that for all $X,A \subseteq  \F_p(t)^2$ we have 
  $|\{((x,y),(a,b)) \in X\times A : y=ax+b\}| \leq  
  C\max(|X|,|A|)^{\frac32-\epsilon}$?
  (Remark~\ref{r:bounds} yields a bound depending on $p$ of $\epsilon = 
  \frac1{16p+14}$ in this case. Meanwhile one can obtain a lower bound 
  exponent of $\frac43$ by considering a rectangular example with bounded 
  degree polynomials, $\F_p[t]_{<n} \times \F_p[t]_{<2n}$.)
\end{question}

\section{Elekes-Szabó consequences}
\newcommand{\ccl}{{\operatorname{ccl}}}
\newcommand{\trd}{{\operatorname{trd}}}
\providecommand{\bdl}{\boldsymbol\delta}
Elekes-Szabó \cite{ES} exploit incidence bounds in characteristic zero to find 
that commutative algebraic groups are responsible for ternary algebraic 
relations with asymptotically large intersections with finite grids.
In \cite{BB-cohMod}, this is generalised to relations of arbitrary arity. In 
this section, we remark that these arguments go through in the present 
positive characteristic context, at least if we restrict to the 1-dimensional 
situation of \cite[Theorem~1.4]{BB-cohMod}.

Let $K_0$ be a field admitting a valuation with finite residue field (e.g.\ a 
function field over a finite field).
Let $\U$ be a non-principal ultrafilter on $\omega$.
Define
$$K' := ((K_0)^\U)^\alg \leq  ((K_0)^\alg)^\U =: K.$$
For $r\geq 1$, let
$$K_r := \{ a \in (K_0)^\alg : \deg(K_0(a)/K_0) | r  \}.$$
\newcommand{\Los}{\polL{}o\'s}
So (by \Los's theorem) we have $K' = \bigcup_{r \in \omega} (K_r)^\U$.

We work with the setup of \cite[2.1]{BB-cohMod}, with $(K_0)^\alg$ in place of 
$\C$, and in a countable language in which each of these internal sets 
$(K_r)^\U \subseteq  K$ is definable. 

\begin{theorem} \label{t:ES}
  Let $K_0$ be a field admitting a valuation with finite residue field.
  Let $V \subseteq  \A^n$ be an affine algebraic variety defined over $K_0$ of 
  dimension $d$.
  Then at least one of the following holds:
  \begin{enumerate}[(i)]\item $V$ admits a powersaving on $K_0$: there exist $C,\eps > 0$ such that 
  for all $X_i \subseteq_{\fin}  K_0$, $i=1,\ldots ,n$, we have
  $$|V(K_0) \cap \prod_i X_i| \leq  C(\max_i |X_i|)^{d-\eps}.$$
  \item $V$ is \emph{special}: $V$ is in co-ordinatewise 
  correspondence\footnote{As defined in \cite[Definition~1.1]{BB-cohMod}} with 
  a product $\prod_i H_i \leq  \prod {G_i}^{n_i}$ of connected subgroups of $H_i$ 
  of powers $G_i$ of 1-dimensional algebraic groups.
  \end{enumerate}
\end{theorem}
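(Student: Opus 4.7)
The plan is to verify that the hypotheses of \cite[Theorem~1.4]{BB-cohMod} hold in our setting, with Theorem~\ref{t:SzT} (applied uniformly to the subfields $K_r$) supplying the required incidence-theoretic input.

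First I would set up the ultraproduct framework of \cite[Section~2.1]{BB-cohMod}. Assuming (i) fails, choose finite $X_i^{(k)} \subseteq K_0$ witnessing $|V(K_0) \cap \prod_i X_i^{(k)}| \geq (\max_i|X_i^{(k)}|)^{d-1/k}$ for each $k \geq 1$, and form the internal pseudofinite sets $X_i := \prod_k X_i^{(k)}/\U \subseteq K_0^\U \subseteq K' \subseteq K$. The failure of powersaving translates into a pseudofinite-dimensional statement about $V(K) \cap \prod_i X_i$, placing us precisely in the setting of \cite{BB-cohMod}.

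Next I would run through the proof of \cite[Theorem~1.4]{BB-cohMod}. Its sole external input is a Szemerédi-Trotter bound (equivalently, a distal cell decomposition, by Remark~\ref{r:distDist}) for binary relations on the $X_i$ definable in $K$ over parameters algebraic over $K_0$. Every such relation has parameters in some $K_r$ (for $r$ depending on the relation), and $X_i \subseteq K_0^\U \subseteq (K_r)^\U$. By Theorem~\ref{t:fin-res-distal} combined with \Los's theorem, $(K_r)^\U$ is distal in $K$, and Theorem~\ref{t:SzT} then delivers the required bound. The remainder of the proof of \cite[Theorem~1.4]{BB-cohMod}---the reduction to a binary group configuration, the extraction of a 1-dimensional algebraic group, and the assembly of the coordinatewise correspondence---is purely algebraic and model-theoretic, uses no characteristic-zero-specific input, and transfers verbatim to yield that $V$ is special. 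Finally, this specialness is identified inside $K$ but descends to a correspondence over $(K_0)^{\alg}$, since all the relevant data are definable over an algebraic extension of $K_0$.

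The hard part is the bookkeeping in the second step: one must check that every appeal to distality in the proof of \cite[Theorem~1.4]{BB-cohMod} really can be localised to some $(K_r)^\U$, so that Theorem~\ref{t:fin-res-distal} suffices in place of distality of the ambient algebraically closed field (which fails in positive characteristic by \cite{KSW}). The key point enabling this is that all parameters arising during the reconstruction of a group from a configuration are algebraic over the original parameters of $V$, so tracking degrees through the argument keeps us inside a fixed $K_r$; this is the ``mild strengthening'' referred to in the introduction, and it is exactly what Theorem~\ref{t:SzT} delivers.
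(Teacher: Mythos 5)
Your overall strategy matches the paper's: assume powersaving fails, pass to the ultraproduct setting of \cite[2.1]{BB-cohMod}, and rerun the proof of \cite[Theorem~1.4]{BB-cohMod} with Theorem~\ref{t:SzT} supplying the incidence bounds, taking care that every appeal to distality can be localised to some $(K_r)^\U$ (the paper handles this exactly as you suggest: the relevant parameters $\overline{a},\overline{d}$ lie in $(K')^{<\omega}$, hence in $((K_r)^\U)^{<\omega}$ for some $r$, and this passes to the types since $(K_r)^\U$ is definable).

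However, there is a genuine gap in your claim that the incidence bound is the \emph{sole} external input and that ``the extraction of a 1-dimensional algebraic group \ldots{} uses no characteristic-zero-specific input and transfers verbatim.'' It does not. The group reconstruction step in \cite{BB-cohMod} --- passing from an embedding of the configuration into a projective subgeometry of the $\acl$-geometry to an actual 1-dimensional algebraic group and a connected algebraic subgroup $AG^m \leq G^n$ --- rests on \cite[Proposition~A.4]{BB-cohMod}, which is proven only in characteristic~0. The paper must replace this by \cite[Theorem~3.3.1]{EvansHrushovski}, and this substitution is precisely why the theorem is confined to the 1-dimensional situation of \cite[Theorem~1.4]{BB-cohMod} rather than the higher-dimensional \cite[Theorem~1.11]{BB-cohMod} (see the remark following the theorem in the paper). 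A related point you omit: in positive characteristic $\End^0_{C_0}(G)$ must be redefined as the skew-field of quotients of $\End_{C_0}(G)$ rather than $\Q\otimes\End_{C_0}(G)$. Without addressing the group-reconstruction input your argument is incomplete at exactly the step where characteristic matters most.
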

\begin{proof}
\renewcommand{\a}{{\overline{a}}}
  Let $K' \leq  K$ be as above. Also let $C_0 \leq  K'$ be a countable algebraically 
  closed subfield over which $V$ is defined.

  The proof in \cite{BB-cohMod} goes through, but using Theorem~\ref{t:SzT} in place 
  of \cite[Theorem~2.14]{BB-cohMod}, and with 
  \cite[Theorem~3.3.1]{EvansHrushovski} replacing \cite[Proposition 
  A.4]{BB-cohMod}. We describe the necessary changes.

  Firstly, \cite[Theorem~2.15]{BB-cohMod} goes through in the case that $X_i 
  \subseteq  ((K_r)^\U)^{n_i}$ for some $r$ ($i=1,2$). The proof is identical, using 
  Theorem~\ref{t:SzT}; the sublinearity of the dependence on $s$ where $K_{2,s}$ is 
  omitted, discussed after \cite[Theorem~2.14]{BB-cohMod}, also holds here: 
  this is described in \cite[Remark~2.7(2),Corollary~2.8]{CS-ES1d}, and is 
  proven explicitly in \cite[Theorem~2.6]{CPS-ES}. (In fact this sublinearity 
  isn't necessary for the present 1-dimensional case.)

  Now \cite[Theorem~5.9]{BB-cohMod} goes through for $P \subseteq  (K')^{<\omega}$. 
  The proof is identical, except that in the proof of 
  \cite[Proposition~5.14]{BB-cohMod}, since $\a,\d \in (K')^{<\omega}$, already 
  $\a,\d \in ((K_r)^\U)^{<\omega}$ for some $r$, and this passes through to the 
  types $X_i$ since $(K_r)^\U$ is definable, so the above restricted form of 
  \cite[Theorem~2.15]{BB-cohMod} applies.

  Next, $\End^0_{C_0}(G)$ must be redefined as the skew-field of quotients of 
  $\End_{C_0}(G)$ (this agrees with $\Q\otimes\End_{C_0}(G)$ in characteristic 
  0); see \cite[3.1]{EvansHrushovski} for discussion of the possibilities.

\newcommand{\G}{{\mathcal{G}}}
\newcommand{\x}{{\overline{x}}}
\newcommand{\h}{{\overline{h}}}
  Finally, we indicate how to circumvent the use of \cite[Proposition 
  A.4]{BB-cohMod}, which is proven only in characteristic 0, in the 
  1-dimensional case. Where this is applied in 
  \cite[Proposition~6.1]{BB-cohMod},
  we have $a_i \in K'$ ($i=1,\ldots ,n$) such that $\G_\a = \{ \acl^0(a_i) : i \}$ 
  embeds in a projective subgeometry of the $\acl^0$-geometry $\G_K$ of $K$. 
  (Here we have $a_i \in K'$ rather than $a_i \in K^{<\omega}$, as this is what 
  arises in the proof, via \cite[Theorem~7.4]{BB-cohMod}, in the 1-dimensional 
  case corresponding to the statement of the current theorem.)
  By \cite[Theorem~3.3.1]{EvansHrushovski}, there is a 1-dimensional algebraic 
  group $G$ over $C_0$ and generic $\x \in G^m$ over $C_0$ (where 
  $m=\dim(\G_\a)$) and $A \in \operatorname{Mat}_{n,m}(\End(G))$ such that, setting $\h := 
  A\x$, we have $\acl^0(h_i) = \acl^0(a_i)$. Then $\operatorname{loc}^0(\h) = AG^m$ is a 
  connected algebraic subgroup of $G^n$, as required.

  The rest of the proof goes through unchanged.
\end{proof}

\begin{remark}
  The only obstruction to pushing this to higher dimension, i.e.\ to a version 
  of \cite[Theorem~1.11]{BB-cohMod}, is the need to generalise the higher 
  dimensional version of Evans-Hrushovski \cite[Proposition A.4]{BB-cohMod} to 
  positive characteristic.

  Meanwhile, the proof of the converse direction (showing that every special 
  variety admits no powersaving) makes essential use of the characteristic 0 
  assumption in \cite[Proposition~7.10]{BB-cohMod}; this may not be so easy to 
  generalise, and the statement may need to change.

  For these reasons, we leave positive characteristic analogues of 
  \cite[Theorem~1.11]{BB-cohMod} to future work.
\end{remark}

\bibliographystyle{amsalpha}
\bibliography{finResST}
\end{document}